\let\oldv\v
\newcommand\RE{\mathbb{R}}
\newcommand\NA{\mathbb {N}}
\renewcommand\div{\mathop{\rm{div}}\nolimits}
\newcommand\Grad{\mathop{\boldsymbol\nabla}\nolimits}
\newcommand\Grads{\mathop{\boldsymbol\nabla_{\rm sym}}\nolimits}
\newcommand\Huo{H^1_0(\Omega)^d}
\newcommand\Hub{H^1(\B)^d}
\newcommand\Hubd{(H^1(\B)^d)'}
\newcommand\HhalfB{H^{1/2}(\B)^d}
\newcommand\Ldo{L^2_0(\Omega)}
\newcommand\Of{\Omega^f}
\newcommand\Os{\Omega^s}
\newcommand\dx{d\x}
\newcommand\ds{d\s}
\newcommand\dt{\Delta t}
\newcommand\tria{\mathcal{T}_h}
\newcommand\triaB{\mathcal{T}_h^{\B}}
\newcommand\Xb{\overline\X}
\newcommand\Gb{\overline\Gamma}
\newcommand\Vo{V_0}
\newcommand\Voh{V_{0,h}}
\newcommand\Vh{V_h}
\newcommand\Qh{Q_h}
\newcommand\Sh{S_h}
\newcommand\Lh{\LL_h}
\newcommand\lh{\llambda_h}
\newcommand\uh{\u_h}
\newcommand\ph{p_h}
\newcommand\Xh{\X_h}
\renewcommand\a{\mathbf{a}}
\renewcommand\c{\mathbf{c}}
\newcommand\f{\mathbf{f}}
\newcommand\g{\mathbf{g}}
\renewcommand\d{\mathbf{d}}
\newcommand\n{\mathbf{n}}
\newcommand\s{\mathbf{s}}
\renewcommand\u{\mathbf{u}}
\renewcommand\v{\mathbf{v}}
\newcommand\w{\mathbf{w}}
\newcommand\x{\mathbf{x}}
\newcommand\z{\mathbf{z}}
\newcommand\X{\mathbf{X}}
\newcommand\Y{\mathbf{Y}}
\newcommand\Z{\mathbf{Z}}
\newcommand\U{\mathbf{U}}
\newcommand\V{\mathbf{V}}
\newcommand\B{\mathcal B}
\newcommand\ssigma{\boldsymbol{\sigma}}
\newcommand\llambda{\boldsymbol{\lambda}}
\newcommand\LL{\boldsymbol{\Lambda}}
\newcommand\mmu{{\boldsymbol{\mu}}}
\newcommand{\F}{\mathbb{F}}
\renewcommand{\P}{\mathbb{P}}
\newcommand\VV{\mathbb{V}}
\renewcommand\AA{\mathbb{A}}
\newcommand\BB{\mathbb{B}}
\newcommand\K{\mathbb{K}}
\newcommand\KBB{\K_{\BB}}
\newcommand\KBBh{\K_{\BB,h}}
\newcommand\ucX{\u(\X(\cdot,t),t)}
\newcommand\uhcX{\uh(\X(\cdot,t),t)}
\newcommand\vcX{\v(\X(\cdot,t))}
\newcommand\ucXb{\u(\Xb)}
\newcommand\uhcXb{\uh(\Xb)}
\newcommand\vcXb{\v(\Xb)}
\newcommand\vhcXb{\v_h(\Xb)}
\newcommand\dr{\delta_\rho}
\newcommand{\Af}{A_f}
\newcommand{\As}{A_s}
\newcommand{\Bf}{B_f}
\newcommand{\Bft}{B_f^\top}
\newcommand{\Cs}{C_s}
\newcommand{\Cst}{C_s^\top}
\newcommand{\Cf}{C_f}
\newcommand{\Cft}{C_f^\top}
\newcommand{\zero}{0}
\newcommand{\CC}{\mathbb{C}}
\newcommand{\feibm}{\textsc{fe-ibm}\xspace}
\newcommand{\dlmibm}{\textsc{dlm-ibm}\xspace}
\newcommand{\cfl}{\textsc{cfl}\xspace}
\newcommand\corrigendum[1]{{\color{black}#1}}
\definecolor{BLUE}{rgb}{0,0,1}
\newcommand\corrig[1]{{\color{black}#1}}
\definecolor{cardinal}{rgb}{0.84,0.04,0.33}
\theoremstyle{plain}
\newtheorem{thm}{Theorem}
\newtheorem{proposition}[thm]{Proposition}
\newtheorem{lemma}[thm]{Lemma}
\newtheorem{problem}{Problem}
\theoremstyle{remark}
\newtheorem*{remark}{Remark}
\begin{document}
\title[DLM/FD for FSI]
{A fictitious domain approach with Lagrange multiplier 
for fluid-structure interactions}
\author{Daniele Boffi}
\address{Dipartimento di Matematica ``F. Casorati'', Universit\`a di Pavia,
Italy}
\email{daniele.boffi@unipv.it}
\urladdr{http://www-dimat.unipv.it/boffi/}
\author{Lucia Gastaldi}
\address{DICATAM, Universit\`a di Brescia, Italy}
\email{lucia.gastaldi@unibs.it}
\urladdr{http://lucia-gastaldi.unibs.it}
\subjclass{65N30, 65N12, 74F10}
\begin{abstract}
We study a recently introduced formulation for fluid-structure interaction
problems which makes use of a distributed Lagrange multiplier in the spirit of
the fictitious domain method. The time discretization of the problem leads to
a mixed problem for which a rigorous stability analysis is provided. The
finite element space discretization is discussed and optimal convergence
estimates are proved.
\end{abstract}
\maketitle
\section{Introduction}
\label{se:intro}
Numerical schemes for fluid-structure interaction problems include interface
fitted meshes (thus requiring suitable remeshing in order to keep the fluid
computational grid aligned with the interface) or interface non-fitted meshes
(allowing to keep the fluid computational grid fixed and independent from the
position of the solid).

The immersed boundary method (see~\cite{PeAN} for a review) is a typical
example of non-fitted schemes. It has been introduced in the 70's for the
simulation of biological problems related to the blood flow in the heart and
it has been extended to finite elements in a series of papers starting
from~\cite{feibm1} by using a variational approach (\feibm) and~\cite{WangLiu}
where finite elements and reproducing kernel particle methods are combined.
The \feibm allows for \emph{thick} (i.e., of codimension zero) or
\emph{thin} (i.e., of codimension one) structures. In particular, in~\cite{BGHP}
the original fiber-like description of the structure has been abandoned in
favor of a more natural and intrinsically \emph{thick} modeling of the solid
domain. With this representation, a unified treatment of immersed structures
is possible in any combination of dimensions. Two novelties have been
introduced in~\cite{costanzo}: a compressible model for the structure has been
considered, and the motion of the solid has been taken care with a fully
variational approach.

In~\cite{bcg2015} a new formulation (\dlmibm) for fluid-structure interaction
problems has been introduced based on the \feibm which makes use of a
distributed Lagrange multiplier in the spirit of the fictitious domain method
(see, for instance,~\cite{girglo1995,girglopan,glokuz2007}). The \dlmibm in
the codimension one case has some similarities with the so called
``immersogeometric'' method recently introduced in~\cite{kamensky,kamensky2}.
An important feature of the \dlmibm, as it has been shown in~\cite{bcg2015},
is that its semi-implicit time discretization results to be unconditionally
stable as opposed to the standard \feibm where a suitable \cfl condition has
to be satisfied (see~\cite{BGHM3AS,heltai,BCG2011}).

The time discretization of the problem leads to a saddle point problem (see
Problem~\ref{pb:saddlepoint} and its discrete counterpart
Problem~\ref{pb:saddlepointh}).
The main contribution of this paper is the rigorous analysis of
Problems~\ref{pb:saddlepoint} and~\ref{pb:saddlepointh}: it is shown that the
saddle point problems are stable and that the discrete solution converges
optimally towards the continuous one. Suitable conditions on the solid mesh
are stated: in the case of codimension zero structures, the mesh is assumed to
allow $H^1$ stability for the $L^2$ projection (more detailed description of
this assumption is given in the discussion after
Proposition~\ref{pr:infsuph}); in the case of codimension one structures, the
solid meshsize is assumed to satisfy a suitable compatibility condition with
respect to the fluid one.

The structure of the paper is the following: in Section~\ref{se:pb} we
introduce the problem and derive the \dlmibm in the case of solids of
codimension zero.
This is a new approach and provides an interesting result, since the
\dlmibm , previously derived as a modification of the \feibm, is now seen as a
natural fictitious domain formulation originating from a strong form of a
fluid-structure interaction problem. Section~\ref{se:dt} recalls the
semi-implicit time discretization of the \dlmibm and the known energy
estimates. Section~\ref{se:mixed} is the core of our paper in the case of
thick structures, presenting the analysis of the mixed problem and of its
numerical approximation. Finally, Section~\ref{se:thin} performs the same
analysis of the mixed problem in the case where thin structures are
considered.

\section{Fictitious domain approach in the case of a thick solid immersed in a
fluid}
\label{se:pb}
The fluid-structure interaction system that we are going to analyze in this
paper consists of a solid elastic body immersed in a fluid.
We refer to a \emph{thick} solid when it occupies a domain of
codimension zero, and to a \emph{thin} solid when the corresponding domain can
be reduced to a region of codimension one in the fluid by using standard
assumptions on the behavior of the involved physical quantities.
This case will be treated in Section~\ref{se:thin}.

Let $\Of_t\subset\RE^d$ and $\Os_t\subset\RE^d$ with $d=2,3$ be the time
dependent regions occupied by the fluid and the structure, respectively.
We set $\Omega$ the interior of
$\overline{\Omega}^f_t\cup\overline{\Omega}^s_t$ and
assume that $\Omega$ is a fixed domain.
We denote by $\Gamma_t=\partial\Of_t\cap\partial\Os_t$ the moving interface 
between the fluid and the solid regions. For simplicity, we assume that
the structure is immersed in the fluid so that
$\partial\Os_t\cap\partial\Omega=\emptyset$.

Assuming that both the fluid and the solid material are incompressible,
the fluid-structure interaction problem can be written in a very general form 
as follows:
\begin{equation}
\label{eq:general}
\aligned
&\rho_f\dot\u_f=\div\ssigma_f  &&\text{in }\Of_t\\
&\div\u_f=0 &&\text{in }\Of_t\\
&\rho_s\dot\u_s=\div\ssigma_s &&\text{in }\Os_t\\
&\div\u_s=0 &&\text{in }\Os_t\\
&\u_f=\u_s &&\text{on }\Gamma_t\\
& \ssigma_f\n_f=-\ssigma_s\n_s &&\text{on }\Gamma_t.
\endaligned
\end{equation}
The system can be complemented with the following initial and boundary
conditions on $\partial\Omega$:
\begin{equation}
\label{eq:ibc}
\aligned
&\u_f(0)=\u_{f0} && \text{on }\Of_0,\\
&\u_s(0)=\u_{s0} && \text{on }\Os_0,\\
&\u_f(t)=0 && \text{on }\partial\Omega.
\endaligned
\end{equation}
In~\eqref{eq:general} $\u$, $\ssigma$, and $\rho$ denote velocity, stress
tensor, and mass density, respectively.
The subscript $f$ or $s$ refers to fluid or solid.
We assume moreover that $\rho_f$ and $\rho_s$ are positive constants.

In the following we introduce the constitutive laws for fluid and solid
materials and derive the variational formulation
of~\eqref{eq:general}-\eqref{eq:ibc}.

First of all, let us define some functional spaces we shall work with.
For a domain $\omega$ we denote by $L^2(\omega)$ the space of 
square integrable functions in $\omega$, endowed with the norm 
$\|v\|_{0,\omega}^2=\displaystyle\int_\omega |v|^2dx$ and the corresponding
scalar product denoted by $(\cdot,\cdot)_{\omega}$. Then $H^1(\omega)$ is
the space of functions belonging to $L^2(\omega)$ together with their gradient;
then $\|v\|_{1,\omega}^2=\|v\|_{0,\omega}^2+\|\nabla v\|_{0,\omega}^2$
defines the norm in $H^1(\omega)$.
We denote by $H^1_0(\omega)$ the subspace of $H^1(\omega)$ of functions
vanishing on the boundary of $\omega$ and by $L^2_0(\omega)$ the subspace of
$L^2(\omega)$ of functions with zero mean value.
When $\omega=\Omega$ we omit the subscripts $\Omega$.

The equations in~\eqref{eq:general} are written using the Eulerian description,
but the deformation of the solid is usually described in the Lagrangian
framework. For this, we consider $\Os_t$ as the image of a reference domain
$\B\subset\RE^d$. For every $t\in[0,T]$, we associate points 
$\s\in\B$ and $\x\in\Os_t$ via a family of mappings $\X(t):\B\to\Os_t$.
We refer to $\s\in\B$ as the material or Lagrangian coordinate and to
$\x=\X(\s,t)$ as the spatial or Eulerian coordinate with $\x\in\Os_t$.
We assume that $\X$ fulfills the following conditions:
$\X(t)\in W^{1,\infty}(\B)$, $\X(t)$ is one to one, and
there exists a constant $\gamma$ such that for all $t\in[0,T]$ 
$\|\X(\s_1,t)-\X(\s_2,t)\|\ge\gamma\|\s_1-\s_2\|$ for all $\s_1,\s_2\in\B$.
Note that this requirements imply that $\X(t)$ is invertible with Lipschitz
inverse. This in particular implies that $\Y\in\Hub$ if and only if
$\v=\Y(\X^{-1}(t))\in H^1(\Os_t)^d$.
The deformation gradient is defined as $\F=\Grad_s\X$ and we indicate with
$|\F|$ its determinant. 
In~\eqref{eq:general}, the dot over the velocity denotes the material
time derivative. In the fluid, using
the Eulerian description, we have 
$\dot\u_f=\partial\u_f/\partial t+\u_f\cdot\Grad\u_f$.
In the solid, the Lagrangian framework is preferred and the spatial description
of the material velocity reads
\begin{equation}
\label{eq:materialvelocity}
\u_s(\x,t)=\frac{\partial\X(\s,t)}{\partial t}\Big|_{\x=\X(\s,t)}
\end{equation}
so that $\dot\u_s(\x,t)=\partial^2\X(\s,t)/\partial t^2|_{\x=\X(\s,t)}$.
Thanks to the incompressibility condition for fluid and solid, expressed by the
divergence free condition in~\eqref{eq:general}, it results that $|\F|$ is
constant in time and equals its initial value. In particular, if the reference
domain $\B$ coincides with the initial position of the solid $\Os_0$ one has
that $|\F|=1$ for all $t$.

Let us introduce now the constitutive laws for fluid and solid materials, in
order to model the stress tensor.
We consider a Newtonian fluid characterized by the usual Navier--Stokes stress
tensor
\begin{equation}
\label{eq:sigmaf}
\ssigma_f=-p_f\mathbb{I}+\nu_f\Grads\u_f,
\end{equation}
where $\Grads\u=(1/2)\left(\Grad\u_f+(\Grad\u_f)^\top\right)$ is the symmetric
gradient and $\nu_f$ represents the viscosity of the fluid.
The solid material is assumed to be viscous-hyperelastic, so that the Cauchy
stress tensor can be represented as the sum $\ssigma_s=\ssigma_s^f+\ssigma_s^s$
of a fluid-like part, with viscosity $\nu_s$,
\begin{equation}
\label{eq:sigmas}
\ssigma_s^f=-p_s\mathbb{I}+\nu_s\Grads\u_s
\end{equation}
and an elastic part $\ssigma_s^s$. 
By changing variable from Eulerian to Lagrangian, we express $\ssigma_s^s$
in term of the first Piola--Kirchhoff stress tensor $\P$: 
\begin{equation}
\label{eq:Piola}
\P(\F(\s,t))=
|\F(\s,t)|\ssigma_s^s(\x,t)\F^{-\top}(\s,t)\qquad\text{for }\x=\X(\s,t).
\end{equation}
On the other hand, hyperelastic materials are characterized by a positive energy
density $W(\F)$ which is related to the Piola--Kirchhoff stress tensor as follows:
\begin{equation}
\label{eq:1st-pk}
(\P(\F(\s,t))_{\alpha i}=
\frac{\partial W}{\partial\F_{\alpha i}}(\F(\s,t))
=\left(\frac{\partial W }{\partial \F}(\F(\s,t))\right)_{\alpha i},
\end{equation}
where $i = 1,\ldots,m$ and $\alpha=1,\ldots,d$. The elastic potential energy of
the body is given by:
\begin{equation}
\label{eq:potenergy}
E\left(\X(t)\right)=\int_\B W(\F(s,t))\ds.
\end{equation}
Let $\v\in\Huo$ be given. We multiply the first equation
in~\eqref{eq:general} by $\v|_{\Of_t}$, integrate over $\Of_t$, and integrate
by parts; analogously, we multiply the third equation by
$\v|_{\Os_t}$ and integrate over $\Os_t$, and integrate by parts.
Summing up the two equations and taking into account the transmission
conditions on $\Gamma_t$,  we obtain the following equation which corresponds
to the principle of virtual work:
\[
\int_{\Of_t}\rho_f\dot\u_f\ \v\dx+\int_{\Os_t}\rho_s\dot\u_s\ \v\dx
+\int_{\Of_t}\ssigma_f:\Grads\v\dx+\int_{\Os_t}\ssigma_s:\Grads\v\dx=0.
\]
Introducing the models~\eqref{eq:sigmaf}-\eqref{eq:sigmas}
and taking into account~\eqref{eq:materialvelocity}
and~\eqref{eq:Piola}, we arrive to the following equation
\begin{equation}
\label{eq:virtualwork2}
\aligned
\int_{\Of_t}&\rho_f\dot\u_f\ \v\dx+
\int_{\B}\rho_s\frac{\partial^2\X}{\partial t^2}\ \v(\X(\s,t))\ds
+\int_{\Of_t}\nu_f\Grads\u_f:\Grads\v\dx\\
&-\int_{\Of_t}p_f\div\v\dx
+\int_{\Os_t}\nu_s\Grads\u_s:\Grads\v\dx\\
&+\int_\B\P(\F(\s,t)):\Grad_s\v(\X(\s,t))\ds
-\int_{\Os_t}p_s\div\v\dx
=0 \quad\forall\v\in\Huo,
\endaligned
\end{equation}
where we used the standard notation
$\mathbb{D}:\mathbb{E}=
\sum_{\alpha,i=1}^d\mathbb{D}_{\alpha i}\mathbb{E}_{\alpha i}$ for all tensors
$\mathbb{D}$ and $\mathbb{E}$.

We observe that in~\eqref{eq:virtualwork2} $p_f$ and $p_s$ are not uniquely
determined. In fact, if we take $p_f+c_f$ and $p_s+c_s$ instead of $p_f$ and
$p_s$, respectively, the left hand side of~\eqref{eq:virtualwork2} does not
change if $c_f=c_s$. To avoid this situation we impose that
\begin{equation}
\label{eq:p_unique}
\int_{\Of_t} p_f\dx+\int_{\Os_t} p_s\dx=0.
\end{equation} 
At the end, the incompressibility condition for both materials can be written
in variational form as:
\begin{equation}
\label{eq:divfree}
\int_{\Of_t}\div\u_f q\dx+\int_{\Os_t}\div\u_s q\dx=0\quad\forall q\in\Ldo.
\end{equation}

Then the fluid-structure interaction problem can be written in the following
form.
\begin{problem}
\label{pb:orig}
For $t\in]0,T]$ find $\u_f(t)\in H^1(\Of_t)^d$, $p_f(t)\in L^2(\Of_t)$,
$\u_s(t)\in H^1(\Os_t)^d$, $p_s(t)\in L^2(\Os_t)$, and
$\X(t)\in\Hub$ such that $\u_f(t)=\u_s(t)$ on $\Gamma_t$, and
equations~\eqref{eq:virtualwork2}, \eqref{eq:p_unique}, \eqref{eq:divfree}, 
\eqref{eq:materialvelocity}, and~\eqref{eq:ibc} are satisfied together with
$\X(0)=\X_0$ on $\B$, where $\X_0:\B\to\Os_0$.
\end{problem}
\begin{remark}
Thanks to~\eqref{eq:materialvelocity}, the initial condition for $\u_s$
provides also an initial condition for $\partial\X/\partial t$.
\end{remark}

In the following, we use a \emph{fictitious domain} approach with a distributed
Lagrange multiplier in order to
rewrite the variational formulation of the problem.
Namely, we extend the fluid velocity and pressure into the solid domain
by introducing new unknowns with the following meaning:
\begin{equation}
\label{eq:FDunknowns}
\u=\left\{
\begin{array}{ll}
\u_f&\text{ in } \Of_t\\
\u_s&\text{ in } \Os_t
\end{array}
\right.,\quad
p=\left\{
\begin{array}{ll}
p_f&\text{ in } \Of_t\\
p_s&\text{ in } \Os_t
\end{array}
\right.
\end{equation}
with the condition that the material velocity of the solid is equal to the
velocity of the fictitious fluid, that is 
\begin{equation}
\label{eq:constraint}
\frac{\partial\X(\s,t)}{\partial t}=\u(\X(\s,t),t) \quad\text{for }\s\in\B.
\end{equation}
This equation, which governs the evolution of the immersed solid, 
represents a constraint for the problem, therefore
we enforce it in variational form by introducing a Lagrange multiplier as
follows. 
Let $\LL$ be a functional space to be defined later on and 
$\c:\LL\times\Hub\to\RE$ a bilinear form such that 
\begin{equation}
\label{eq:defc}
\aligned
&\c \text{ is continuous on } \LL\times\Hub\\
&\c(\mmu,\Z)=0 \text{ for all } \mmu\in\LL \text{ implies } \Z=0.
\endaligned
\end{equation}
For example we can take as $\LL$ the dual space of $\Hub$ and define $\c$ as
the duality pairing between $\Hub$ and $\Hubd$, that is:
\begin{equation}
\label{eq:defc1}
\c(\mmu,\Y)=\langle\mmu,\Y\rangle\quad\forall\mmu\in\Hubd,\ \Y\in\Hub.
\end{equation}
Alternatively, one can set $\LL=\Hub$ and define
\begin{equation}
\label{eq:defc2}
\c(\mmu,\Y)=\left(\Grad_s\mmu,\Grad_s\Y\right)_\B+\left(\mmu,\Y\right)_\B
\quad\forall\mmu,\ \Y\in\Hub.
\end{equation}
Relation~\eqref{eq:constraint} can now be written in variational form as:
\begin{equation}
\label{eq:varconstr}
\c\left(\mmu,\ucX-\frac{\partial\X}{\partial t}(t)\right)
   =0 \quad\forall\mmu\in \LL.
\end{equation}
Then the problem can be formulated in the following weak form.
\begin{problem}
\label{pb:pbvar}
Given $\u_0\in\Huo$ and $\X_0\in W^{1,\infty}(\B)^d$,
for almost every $t\in]0,T]$ find
$(\u(t),p(t))\in\Huo\times\Ldo$, $\X(t)\in\Hub$,
and $\llambda(t)\in\LL$ such that it holds
\begin{subequations}
\begin{alignat}{2}
  &\rho_f\frac d {dt}(\u(t),\v)+b(\u(t),\u(t),\v)+a(\u(t),\v)\qquad\notag&&\\
  &\qquad-(\div\v,p(t))+\c(\llambda(t),\vcX)=0
   &&\quad\forall\v\in\Huo
     \label{eq:NS1_DLM}\\
  &(\div\u(t),q)=0&&\quad\forall q\in\Ldo
     \label{eq:NS2_DLM}\\
  &\dr\left(\frac{\partial^2\X}{\partial t^2}(t),\Y\right)_{\B}
+(\P(\F(t)),\Grad_s\Y)_{\B}-\c(\llambda(t),\Y)=0&&\quad\forall\Y\in\Hub
     \label{eq:solid_DLM}\\
  &\c\left(\mmu,\ucX-\frac{\partial\X}{\partial t}(t)\right)
   =0 &&\quad\forall\mmu\in \LL
     \label{eq:ode_DLM}\\
  &\u(0)=\u_0\quad\mbox{\rm in }\Omega,\qquad\X(0)=\X_0\quad\mbox{\rm in }\B.
     \label{eq:ci_DLM}
\end{alignat}
\end{subequations}
\end{problem}
Here $\dr=\rho_s-\rho_f$ and
\[
\aligned
&a(\u,\v)=(\nu\Grads\u,\Grads\v)\quad \text{with }
\nu=\left\{\begin{array}{ll}
\nu_f&\text{in }\Of_t\\
\nu_s&\text{in }\Os_t
\end{array},\right.
\\
&b(\u,\v,\w)=
\frac{\rho_f}2\left((\u\cdot\Grad\v,\w)-(\u\cdot\Grad\w,\v)\right).
\endaligned
\]
We assume that $\nu\in L^\infty(\Omega)$ and that there exists a positive
constant $\nu_0>0$ such that $\nu\ge\nu_0>0$ in $\Omega$.
\begin{remark}
In the literature of the Immersed Boundary Method, it is generally assumed
that the fluid and the solid visco-hyperelastic materials have the same
viscosity.
If this is not the case, the integral in the definition of $a$ has to be 
decomposed into the integral over $\Of_t$ and $\Os_t$. Therefore, 
in the finite element discretization, the associated stiffness matrix has to be
recomputed at each time step as $\nu$ is discontinuous along the moving
interface $\Gamma_t$.
This could be avoided if we treat this term using the fictitious domain method  
as it is done for the first integral
containing the time derivative of $\u$.
Then Problem~\ref{pb:pbvar} takes the following form.
\begin{problem}
\label{pb:pbvar2}
Given $\u_0\in\Huo$ and $\X_0\in W^{1,\infty}(\B)^d$,
find $(\u(t),p(t))\in\Huo\times\Ldo$, $\X(t)\in\Hub$, and
$\llambda(t)\in\LL$,
such that for almost every $t\in]0,T]$ it holds
\[
\aligned
  &\rho_f\frac d {dt}(\u(t),\v)+b(\u(t),\u(t),\v)+\tilde a(\u(t),\v)
&&\\
  &\qquad-(\div\v,p(t))+\c(\llambda(t),\vcX)=0
   &&\quad\forall\v\in\Huo
\\
  &(\div\u(t),q)=0&&\quad\forall q\in\Ldo
\\
  &\dr\left(\frac{\partial^2\X}{\partial t^2}(t),\Y\right)_{\B}+
   d\left(\frac{\partial\X}{\partial t}(t),\Y\right)\\
&+(\P(\F(t)),\Grad_s\Y)_{\B}-\c(\llambda(t),\Y)=0&&\quad\forall\Y\in\Hub
\\
  &\c\left(\mmu,\ucX-\frac{\partial\X}{\partial t}(t)\right)
   =0 &&\quad\forall\mmu\in \LL
\\
  &\u(0)=\u_0\quad\mbox{\rm in }\Omega,\qquad\X(0)=\X_0\quad\mbox{\rm in }\B,
\endaligned
\]
\end{problem}
where $\tilde a(\u,\v)=\nu_f(\Grads\u,\Grads\v)$ and
\[
\aligned
d(\X&,\Y)=\\
&\frac12\int_\B(\nu_s-\nu_f)
\left(\Grad_s\X\F^{-1}+\F^{-\top}\Grad_s\X^\top\right):
\left(\Grad_s\Y\F^{-1}+\F^{-\top}\Grad_s\Y^\top\right)|\F|\ds.
\endaligned
\]
For the sake of simplicity, in the rest of this paper we are going to
consider a constant viscosity throughout the domain.
\end{remark}

First of all we show that Problems~\ref{pb:orig} and~\ref{pb:pbvar} 
are equivalent.
\begin{thm}
\label{th:equivalenza}
Let $(\u_f,\u_s,p_f,p_s,\X)$ be a solution of Problem~\ref{pb:orig}, 
such that $\X(t)\in W^{1,\infty}(\B)^d$ and $\X(t):\B\to\Os_t$ is one to one,
then setting $(\u,p)$ as in~\eqref{eq:FDunknowns}, there exists
$\llambda(t)\in\LL$ such that $(\u,p,\X,\llambda)$ is a solution
of Problem~\ref{pb:pbvar}. 

Conversely, let $(\u,p,\X,\llambda)$ be a solution of Problem~\ref{pb:pbvar}
such that $\X(t)\in W^{1,\infty}(\B)^d$ and $\X(t):\B\to\Os_t$ is one to one.
Set $\u_f(t)=\u(t)|_{\Of_t}$, $p_f(t)=p(t)|_{\Of_t}$,
$\u_s(t)=\u(t)|_{\Os_t}$, $p_s(t)=p(t)|_{\Os_t}$, then
$(\u_f,\u_s,p_f,p_s,\X)$ is a solution of Problem~\ref{pb:orig}.
\end{thm}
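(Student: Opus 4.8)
The plan is to exploit that, after the extensions \eqref{eq:FDunknowns} are introduced, the two problems are written in the same unknowns and differ only in how the solid momentum balance and the kinematic law \eqref{eq:constraint} are accounted for: in Problem~\ref{pb:orig} the latter holds pointwise through \eqref{eq:materialvelocity}, while in Problem~\ref{pb:pbvar} it is dualised by $\llambda$. I would pass between the two using three ingredients: (i) the change of variables $\x=\X(\s,t)$, which turns integrals over $\Os_t$ into integrals over $\B$ up to the time-independent Jacobian $|\F|$ (equal to $1$ when $\B=\Os_0$); (ii) the identity $\rho_f\frac{d}{dt}(\u,\v)+b(\u,\u,\v)=\int_{\Of_t}\rho_f\dot\u_f\cdot\v\dx+\int_{\Os_t}\rho_f\dot\u_s\cdot\v\dx$ for $\v\in\Huo$, which holds because $\div\u=0$ and $\v$ vanishes on $\partial\Omega$ and because in $\Os_t$ the Eulerian material derivative of $\u$ coincides with $\dot\u_s=(\partial^2\X/\partial t^2)\circ\X^{-1}$; and (iii) the splitting $\rho_s=\rho_f+\dr$ inside the solid inertial term.

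For the first implication, given $(\u_f,\u_s,p_f,p_s,\X)$ solving Problem~\ref{pb:orig} I would set $(\u,p)$ as in \eqref{eq:FDunknowns}, observing that $\u\in\Huo$ because $\u_f=\u_s$ on $\Gamma_t$ and $\u_f=0$ on $\partial\Omega$, that $p\in\Ldo$ by \eqref{eq:p_unique}, that \eqref{eq:divfree} is exactly \eqref{eq:NS2_DLM}, and that \eqref{eq:ci_DLM} follows from \eqref{eq:ibc} and $\X(0)=\X_0$. Then I would \emph{define} $\llambda(t)$: the map $\Y\mapsto\dr(\partial^2\X/\partial t^2,\Y)_\B+(\P(\F),\Grad_s\Y)_\B$ is a bounded linear functional on $\Hub$ (its boundedness follows from the regularity built into the notion of a solution of Problem~\ref{pb:orig}, together with $\P(\F)\in L^2(\B)$ since $\F\in L^\infty$), so it has a representative $\llambda(t)\in\LL$ — the functional itself when $\LL=\Hubd$, its Riesz representative with respect to \eqref{eq:defc2} when $\LL=\Hub$ — and with this choice \eqref{eq:solid_DLM} holds by construction. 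Equation \eqref{eq:ode_DLM} is then immediate from \eqref{eq:materialvelocity}, and to obtain \eqref{eq:NS1_DLM} I would take an arbitrary $\v\in\Huo$, note that $\vcX\in\Hub$, use $\Y=\vcX$ in the definition of $\llambda$, and substitute into \eqref{eq:virtualwork2} rewritten via (i)--(iii); the terms then match exactly.

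For the converse, given $(\u,p,\X,\llambda)$ solving Problem~\ref{pb:pbvar} I would define $\u_f,p_f,\u_s,p_s$ by restriction to $\Of_t$ and $\Os_t$. Then $\u_f=\u_s$ on $\Gamma_t$ because both are the trace of $\u\in H^1(\Omega)^d$; \eqref{eq:p_unique} is the condition $p\in\Ldo$; \eqref{eq:divfree} is \eqref{eq:NS2_DLM}; and \eqref{eq:ibc} together with $\X(0)=\X_0$ come from \eqref{eq:ci_DLM} and $\u\in\Huo$. From \eqref{eq:ode_DLM}, since $\ucX-\partial\X/\partial t\in\Hub$, the non-degeneracy assumption \eqref{eq:defc} on $\c$ gives $\ucX=\partial\X/\partial t$, i.e.\ \eqref{eq:constraint}, which is \eqref{eq:materialvelocity} because $\u=\u_s$ on $\Os_t$. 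Finally, to recover \eqref{eq:virtualwork2} I would fix $\v\in\Huo$, add \eqref{eq:NS1_DLM} to \eqref{eq:solid_DLM} tested with $\Y=\vcX$ so that the $\c(\llambda,\cdot)$ contributions cancel, and undo (i)--(iii), recombining $\rho_f\dot\u_s+\dr(\partial^2\X/\partial t^2)\circ\X^{-1}=\rho_s\dot\u_s$ in $\Os_t$; the viscous and pressure terms agree termwise by the definition of $a$ and of the restrictions.

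The part I expect to be delicate is not the algebra but the functional-analytic matching: representing the solid functional by some $\llambda\in\LL$ in the first direction, and invoking \eqref{eq:defc} for $\ucX-\partial\X/\partial t$ in the second. Both rest on the hypotheses $\X(t)\in W^{1,\infty}(\B)^d$ and $\X(t)$ one-to-one, together with the standing lower Lipschitz bound on $\X$: these ensure that $\x=\X(\s,t)$ is a bi-Lipschitz isomorphism between $\Os_t$ and $\B$, hence that composition with $\X$ is a bounded isomorphism between $H^1(\Os_t)^d$ and $\Hub$ in both directions (so $\vcX\in\Hub$ and $\ucX\in\Hub$), and that the Jacobian in (i) is a genuine constant. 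Everything else reduces to the bookkeeping of the trilinear form $b$ and of the change of variables $\x=\X(\s,t)$.
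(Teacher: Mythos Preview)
Your argument is correct and follows the same overall architecture as the paper's proof: both directions rest on the bijection $\X(t):\B\to\Os_t$ to identify $\Hub$ with $H^1(\Os_t)^d$, the change of variables and the splitting $\rho_s=\rho_f+\dr$, and in the converse direction the elimination of $\llambda$ by testing \eqref{eq:solid_DLM} with $\Y=\vcX$ and adding it to \eqref{eq:NS1_DLM}.

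The one substantive difference is in the forward direction, in how $\llambda$ is produced. You \emph{define} $\llambda$ so that \eqref{eq:solid_DLM} holds, as the functional $\Y\mapsto\dr(\partial_t^2\X,\Y)_\B+(\P(\F),\Grad_s\Y)_\B$ (or its Riesz representative), and then recover \eqref{eq:NS1_DLM} from \eqref{eq:virtualwork2}. The paper proceeds in the opposite order: it ``introduces $\llambda(t)\in\LL$ such that \eqref{eq:NS1_DLM} is satisfied'' and then deduces \eqref{eq:solid_DLM}. Your choice is cleaner, because existence of such a $\llambda$ is immediate on $\Hub$ via duality or Riesz; the paper's order tacitly needs the observation that the fluid residual in \eqref{eq:NS1_DLM} depends on $\v$ only through $\vcX$ (which is true, but is exactly the computation you carry out explicitly). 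Either way the algebra is the same and the two definitions of $\llambda$ coincide.
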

\begin{proof}
Let $(\u_f,\u_s,p_f,p_s,\X)$ be a solution of Problem~\ref{pb:orig}.
Taking into account that $\partial\Os_t\cap\partial\Omega=\emptyset$, the third
condition in~\eqref{eq:ibc} gives $\u=0$ on $\partial\Omega$. 
From~\eqref{eq:FDunknowns}, we obtain that $\u\in\Huo$, since $\u_f(t)\in
H^1(\Of_t)^d$, $\u_s(t)\in H^1(\Os_t)^d$, and 
$\u_f(t)=\u_s(t)$ on $\Gamma_t$, and that $p\in\Ldo$ thanks
to~\eqref{eq:p_unique}.
Next~\eqref{eq:divfree} implies~\eqref{eq:NS2_DLM}, while
\eqref{eq:defc}, and~\eqref{eq:varconstr} gives 
that~\eqref{eq:ode_DLM} holds true.
Setting 
\[
\u_0=\left\{
\begin{array}{ll}
\u_{f0} & \text{in }\Of_0\\
\u_{s0} & \text{in }\Os_0,
\end{array}
\right.
\]
the initial conditions~\eqref{eq:ci_DLM} are satisfied.
It remains to prove~\eqref{eq:NS1_DLM} and~\eqref{eq:solid_DLM}.
For this, we introduce $\llambda(t)\in\LL$ such that~\eqref{eq:NS1_DLM} is
satisfied.
Differentiating condition~\eqref{eq:constraint} with respect to time gives
$\dot\u(\x,t)=\partial^2\X(\s,t)/\partial t^2|_{\x=\X(\s,t)}$, hence recalling
the incompressibility of the structure, we have the following equality
\[
\int_{\Of_t}\rho_f\dot\u\ \v\dx=
\int_\B\rho_f\frac{\partial^2\X(\s,t)}{\partial t^2}\v(\X(\s,t))\ds.
\]
Then taking into account the definition of the forms $a$ and $b$
and~\eqref{eq:varconstr}, we have from~\eqref{eq:virtualwork2}:
\[
\int_{\B}\dr\frac{\partial^2\X}{\partial t^2}\ \v(\X(\s,t)\ds
+\int_\B\P(\F(\s,t)):\Grad_s\v(\X(\s,t))\ds
=\c(\llambda(t),\vcX) 
\]
for all $\v\in\Huo$. Since $\X(t):\B\to\Os_t$ is one to one 
and belongs to $W^{1,\infty}(\B)$, $\Y=\vcX$ is an arbitrary element of $\Hub$
and~\eqref{eq:solid_DLM} holds true.

Let us now prove the converse. Let $(\u,p,\X,\llambda)$ be a solution of
Problem~\ref{pb:pbvar}
and set $\u_f(t)=\u(t)|_{\Of_t}$, $p_f(t)=p(t)|_{\Of_t}$,
$\u_s(t)=\u(t)|_{\Os_t}$, $p_s(t)=p(t)|_{\Os_t}$.
From~\eqref{eq:ode_DLM} and~\eqref{eq:defc} we have that~\eqref{eq:constraint}
is fulfilled.
Using again the fact that $\X(t):\B\to\Os_t$ is one to one, we take
$\Y=\v(\X(t))$ in~\eqref{eq:solid_DLM} and sum it to~\eqref{eq:NS1_DLM}.
Equations~\eqref{eq:virtualwork2} and~\eqref{eq:divfree} follow from
the definition of $\u_f$, $\u_s$, $p_f$ and $p_s$. Moreover, we have that the
condition~\eqref{eq:p_unique} holds true since $p\in\Ldo$. 
It is easy to verify that the initial and boundary conditions are fulfilled.
\end{proof}
Thanks to the elastic properties of the viscous-hyperelastic material,
(see~\eqref{eq:1st-pk} and~\eqref{eq:potenergy}),
we have the following energy estimate (see~\cite{bcg2015} for the details).
\begin{proposition}
\label{pr:stab-cont}
Let us assume that $\dr\ge0$, that the potential energy density $W$ is a $C^1$
convex function
over the set of second order tensors and that for almost every $t\in[0,T]$,
the solution of Problem~\ref{pb:pbvar}
is such that $\X(t)\in(W^{1,\infty}(\B))^d$ 
with $\frac{\partial \X}{\partial t}(t)\in L^2(\B)^d$,
then the following equality holds true
\begin{equation}
\label{eq:energyest}
\frac{\rho_f}2\frac{d}{dt}||\u(t)||^2_0+\nu||\Grads\u(t)||^2_0+
\frac{\dr}2\frac{d}{dt}\left\|\frac{\partial \X(t)}{\partial t}\right\|^2_{0,\B}
+\frac{d}{dt}E(\X(t))=0.
\end{equation}
\end{proposition}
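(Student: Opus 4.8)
The plan is to carry out the classical energy argument, namely to test Problem~\ref{pb:pbvar} with its own solution. Specifically, I would take $\v=\u(t)$ in~\eqref{eq:NS1_DLM}, $q=p(t)$ in~\eqref{eq:NS2_DLM}, $\Y=\frac{\partial\X}{\partial t}(t)$ in~\eqref{eq:solid_DLM}, and $\mmu=\llambda(t)$ in~\eqref{eq:ode_DLM}, and then add the four resulting scalar identities. The regularity assumptions $\X(t)\in(W^{1,\infty}(\B))^d$ with $\frac{\partial\X}{\partial t}(t)\in L^2(\B)^d$, together with $\u(t)\in\Huo$ and $p(t)\in\Ldo$, are what (formally) makes these choices admissible; as in~\cite{bcg2015}, the computation is understood for solutions smooth enough that all the quantities below are well defined.

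Next I would collect the terms that drop out. The pressure contributions $-(\div\u(t),p(t))$ from~\eqref{eq:NS1_DLM} and $(\div\u(t),p(t))$ from~\eqref{eq:NS2_DLM} are opposite and cancel. For the distributed multiplier, \eqref{eq:NS1_DLM} produces $\c(\llambda(t),\ucX)$, \eqref{eq:solid_DLM} produces $-\c\big(\llambda(t),\frac{\partial\X}{\partial t}(t)\big)$, and \eqref{eq:ode_DLM} with $\mmu=\llambda(t)$ states $\c\big(\llambda(t),\ucX-\frac{\partial\X}{\partial t}(t)\big)=0$; by bilinearity of $\c$, these three terms sum to zero. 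Finally, the convective term vanishes because $b$ is skew-symmetric in its last two arguments, so $b(\u(t),\u(t),\u(t))=\frac{\rho_f}{2}\big((\u(t)\cdot\Grad\u(t),\u(t))-(\u(t)\cdot\Grad\u(t),\u(t))\big)=0$.

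It then remains to recognize the surviving terms as time derivatives. With $\v=\u(t)$, the term $\rho_f\frac{d}{dt}(\u(t),\v)$ equals $\rho_f\big(\frac{\partial\u}{\partial t}(t),\u(t)\big)=\frac{\rho_f}{2}\frac{d}{dt}\|\u(t)\|_0^2$, and $a(\u(t),\u(t))=\nu\|\Grads\u(t)\|_0^2$; similarly, with $\Y=\frac{\partial\X}{\partial t}(t)$, one has $\dr\big(\frac{\partial^2\X}{\partial t^2}(t),\frac{\partial\X}{\partial t}(t)\big)_\B=\frac{\dr}{2}\frac{d}{dt}\left\|\frac{\partial\X}{\partial t}(t)\right\|_{0,\B}^2$. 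The only step that is not bookkeeping is the elastic term: since $\Grad_s\frac{\partial\X}{\partial t}=\frac{\partial}{\partial t}\Grad_s\X=\frac{\partial\F}{\partial t}$ and $W$ is $C^1$, the chain rule combined with the constitutive relation~\eqref{eq:1st-pk} gives
\[
\P(\F(t)):\Grad_s\frac{\partial\X}{\partial t}(t)
=\sum_{\alpha,i}\frac{\partial W}{\partial\F_{\alpha i}}(\F(t))\,\frac{\partial\F_{\alpha i}}{\partial t}(t)
=\frac{d}{dt}W(\F(t)),
\]
whence $\big(\P(\F(t)),\Grad_s\frac{\partial\X}{\partial t}(t)\big)_\B=\frac{d}{dt}\int_\B W(\F(t))\,\ds=\frac{d}{dt}E(\X(t))$ by~\eqref{eq:potenergy}. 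Assembling all contributions reproduces exactly~\eqref{eq:energyest}.

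The difficulty is not in the algebra, which is a one-line substitution, but in justifying that the substitution is legitimate at the stated level of regularity: $\frac{\partial\X}{\partial t}(t)$ must be an admissible test function in $\Hub$ and $t\mapsto W(\F(t))$ must be differentiable in $t$, which is precisely why $\X(t)\in(W^{1,\infty}(\B))^d$, $\frac{\partial\X}{\partial t}(t)\in L^2(\B)^d$ and $W\in C^1$ enter. The sign hypotheses $\dr\ge0$ and the convexity of $W$ (together with $W\ge0$) play no role in the identity~\eqref{eq:energyest} as such; they are what make it a genuine a priori estimate once integrated over $[0,t]$.
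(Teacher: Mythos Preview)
Your proposal is correct and follows exactly the approach of the paper: test each equation of Problem~\ref{pb:pbvar} with $\v=\u(t)$, $q=p(t)$, $\Y=\partial\X(t)/\partial t$, $\mmu=\llambda(t)$, sum, and use~\eqref{eq:1st-pk}--\eqref{eq:potenergy} to rewrite the elastic term as $\frac{d}{dt}E(\X(t))$. Your write-up simply spells out the cancellations and the chain-rule step that the paper leaves implicit.
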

\begin{proof}
The proof is quite simple. Take $\v=\u(t)$, $q=p(t)$,
$\Y=\partial\X(t)/\partial t$, and $\mmu=\llambda(t)$ in
equations~\eqref{eq:NS1_DLM}-\eqref{eq:ode_DLM} respectively and sum. The
inequality~\eqref{eq:energyest} is achieved using~\eqref{eq:1st-pk}
and~\eqref{eq:potenergy} to estimate the second term arising
from~\eqref{eq:solid_DLM}.
\end{proof}

\section{Time semi-discretization}
\label{se:dt}
In this subsection we briefly recall the results of~\cite{bcg2015} related to
the time discretization of Problem~\ref{pb:pbvar}
and to the analysis of the stability of the resulting scheme. The presented
results are valid not only for thick structures but also for thin ones,
according to the formulation that will be presented in Section~\ref{se:thin}.

Given an integer $N>0$, set $\dt=T/N$ the time step and $t_n=n\dt$. 
For a given function $z$ depending on $t$ we denote by $z^n$ the approximation
of $z(t_n)$.

Problem~\ref{pb:pbvar} presents several nonlinear terms whose time
discretization by an implicit method would require the solution of a nonlinear
stationary system with non trivial computational cost. Therefore we adopt the
following semi-implicit time advancing scheme:
\begin{problem}
\label{pb:DLM_sd}
Given $\u_0\in\Huo$ and $\X_0\in W^{1,\infty}(\B)$, for $n=1,\dots,N$
find $(\u^n,p^n)\in\Huo\times\Ldo$, $\X^n\in\Hub$, and
$\llambda^n\in\LL$, such that
\begin{subequations}
\begin{alignat}{2}
  &\rho_f\left(\frac{\u^{n+1}-\u^n}{\dt},\v\right)
   +b(\u^n,\u^{n+1},\v)+a(\u^{n+1},\v)\notag&&\\
  &\qquad\qquad
-(\div\v,p^{n+1})+\c(\llambda^{n+1},\v(\X^n))=0
   &&\forall\v\in\Huo
   \label{eq:NS1sd}\\
  &(\div\u^{n+1},q)=0&&\forall q\in\Ldo
   \label{eq:NS2sd}\\
  &\dr\left(\frac{\X^{n+1}-2\X^n+\X^{n-1}}{\dt^2},\Y\right)_{\B}+
   (\P(\F^{n+1}),\Grad_s\Y)_{\B}\notag&&\\
&\qquad\qquad -\c(\llambda^{n+1},\Y)=0
     &&\forall\Y\in\Hub
   \label{eq:solidsd}\\
  &\c\left(\mmu,\u^{n+1}(\X^n)-\frac{\X^{n+1}-\X^n}{\dt}\right)
   =0 &&\forall\mmu\in\LL
\label{eq:odesd}\\
&\u^0=\u_0,\qquad\X^0=\X_0
\label{eq:cisd}
\end{alignat}
\end{subequations}
\end{problem}
In the second term of~\eqref{eq:solidsd} the implicit quantity $\P(\F^{n+1})$
might be difficult to compute, in such case different choices can be made. In
particular, if $\P$ is linear with respect to $\F$ this term does not cause
any trouble; otherwise it can be linearized.

In order to initialize equation~\eqref{eq:solidsd} we need to know the first
two values $\X^0$ and $\X^1$. These can be obtained from the initial conditions
taking into account~\eqref{eq:constraint} and~\eqref{eq:defc} as follows:
\[
\c\left(\mmu,\u_0(\X^0)-\frac{\X^1-\X^0}{\dt}\right)=0\quad\forall\mmu\in\LL.
\] 
Following the same lines of the proof of Proposition~\ref{pr:stab-cont}, we can
show the following unconditional stability for the time
advancing scheme, (see~\cite{bcg2015} for the details).  
\begin{proposition}
\label{pr:energy_sd}
Under the same assumptions as in Proposition~\ref{pr:stab-cont}, if
$\u^n\in\Huo$ and $\X^n\in\Hub$
for $n=0,\dots,N$ satisfy Problem~\ref{pb:DLM_sd} with
$\X^n\in(W^{1,\infty}(\B))^d$, then
the following estimate holds true for all $n=0,\dots,N-1$
\begin{equation}
\label{eq:energy_sd}
\aligned
&\frac{\rho_f}{2\dt}\left(\|\u^{n+1}\|^2_0-\|\u^n\|^2_0\right)+
\nu\|\Grads\u^{n+1}\|^2_0\\
&+\frac{\dr}{2\dt}\left(\left\|\frac{\X^{n+1}-\X^n}{\dt}\right\|^2_{0,\B}
-\left\|\frac{\X^n-\X^{n-1}}{\dt}\right\|^2_{0,\B}\right)+
\frac{E(\X^{n+1})-E(\X^n)}{\dt} \le0.
\endaligned
\end{equation}
\end{proposition}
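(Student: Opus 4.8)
The plan is to repeat, at the fully discrete level, the energy argument already used for Proposition~\ref{pr:stab-cont}. First I would test Problem~\ref{pb:DLM_sd} with $\v=\u^{n+1}$ in~\eqref{eq:NS1sd}, $q=p^{n+1}$ in~\eqref{eq:NS2sd}, $\Y=(\X^{n+1}-\X^n)/\dt$ in~\eqref{eq:solidsd}, and $\mmu=\llambda^{n+1}$ in~\eqref{eq:odesd}, and then add the four resulting identities. All these test functions are admissible: $\u^{n+1}\in\Huo$ and $p^{n+1}\in\Ldo$ by hypothesis, while $(\X^{n+1}-\X^n)/\dt\in\Hub$ since $\X^n,\X^{n+1}\in\Hub$, so every occurrence of $\c$ is well defined by the continuity assumption~\eqref{eq:defc}.

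Then I would simplify term by term. The trilinear term vanishes, $b(\u^n,\u^{n+1},\u^{n+1})=0$, by skew-symmetry of $b$ in its last two slots; the pressure term $(\div\u^{n+1},p^{n+1})$ in~\eqref{eq:NS1sd} is cancelled by~\eqref{eq:NS2sd}; and with constant viscosity $a(\u^{n+1},\u^{n+1})=\nu\|\Grads\u^{n+1}\|_0^2$. The Lagrange multiplier terms cancel upon summation, because \eqref{eq:odesd} with $\mmu=\llambda^{n+1}$ gives $\c(\llambda^{n+1},\u^{n+1}(\X^n))=\c\big(\llambda^{n+1},(\X^{n+1}-\X^n)/\dt\big)$, which is exactly the difference of the two multiplier contributions appearing in~\eqref{eq:NS1sd} and~\eqref{eq:solidsd}. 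The two discrete time-derivative terms are handled with the polarization identity $2(x-y,x)=|x|^2-|y|^2+|x-y|^2$: writing $\w^{n+1}=(\X^{n+1}-\X^n)/\dt$ and $\w^n=(\X^n-\X^{n-1})/\dt$, the solid inertial term becomes $\frac{\dr}{2\dt}\big(\|\w^{n+1}\|_{0,\B}^2-\|\w^n\|_{0,\B}^2+\|\w^{n+1}-\w^n\|_{0,\B}^2\big)$ and likewise $\rho_f\big((\u^{n+1}-\u^n)/\dt,\u^{n+1}\big)=\frac{\rho_f}{2\dt}\big(\|\u^{n+1}\|_0^2-\|\u^n\|_0^2+\|\u^{n+1}-\u^n\|_0^2\big)$; both quadratic remainders are nonnegative — the solid one precisely because $\dr\ge0$ — and I would discard them. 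Using $\F^k=\Grad_s\X^k$ to rewrite the elastic term, the summed relation at this stage reads
\[
\frac{\rho_f}{2\dt}\big(\|\u^{n+1}\|_0^2-\|\u^n\|_0^2\big)+\nu\|\Grads\u^{n+1}\|_0^2+\frac{\dr}{2\dt}\big(\|\w^{n+1}\|_{0,\B}^2-\|\w^n\|_{0,\B}^2\big)+\frac1\dt\big(\P(\F^{n+1}),\F^{n+1}-\F^n\big)_{\B}\le0.
\]

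Finally I would dispose of the elastic term using convexity of $W$. Since $W$ is $C^1$ and convex on second order tensors, pointwise in $\B$ one has $W(\F^n)\ge W(\F^{n+1})+\frac{\partial W}{\partial\F}(\F^{n+1}):(\F^n-\F^{n+1})$, that is, by~\eqref{eq:1st-pk}, $\P(\F^{n+1}):(\F^{n+1}-\F^n)\ge W(\F^{n+1})-W(\F^n)$; integrating over $\B$ (the integrals being finite because $\X^n\in W^{1,\infty}(\B)^d$ and $W$ is continuous) and recalling~\eqref{eq:potenergy} gives $\big(\P(\F^{n+1}),\F^{n+1}-\F^n\big)_{\B}\ge E(\X^{n+1})-E(\X^n)$. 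Substituting into the displayed inequality yields exactly~\eqref{eq:energy_sd}. I do not anticipate a genuine obstacle: once the correct test functions are inserted the estimate is purely algebraic, as in the continuous case. The only points needing care are the admissibility of $\Y=(\X^{n+1}-\X^n)/\dt$ in $\Hub$, the role of the sign condition $\dr\ge0$ (which is what makes the solid time-difference remainder harmless), and the convexity inequality for $W$, which is the discrete-in-time surrogate of the chain rule $\frac{d}{dt}E(\X(t))=(\P(\F),\Grad_s\partial_t\X)_{\B}$ used in the proof of Proposition~\ref{pr:stab-cont}.
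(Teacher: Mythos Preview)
Your proof is correct and follows exactly the route indicated by the paper, which states that the result is obtained ``following the same lines of the proof of Proposition~\ref{pr:stab-cont}'' and refers to~\cite{bcg2015} for details: one tests with $\v=\u^{n+1}$, $q=p^{n+1}$, $\Y=(\X^{n+1}-\X^n)/\dt$, $\mmu=\llambda^{n+1}$, uses the polarization identity for the discrete time derivatives, and applies the convexity of $W$ to bound the elastic term from below by $E(\X^{n+1})-E(\X^n)$. The cancellation of the multiplier terms via~\eqref{eq:odesd} and the use of $\dr\ge0$ to discard the solid quadratic remainder are precisely the expected ingredients.
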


\section{Analysis of the stationary problem}
\label{se:mixed}
We now focus on the stationary problem that we resolve at each time step and we
analyze its well-posedness and finite element discretization.
We consider a linear model for the Piola--Kirchhoff stress
tensor, that is 
\begin{equation}
\label{eq:PK-lin}
\P(\F)=\kappa\F=\kappa\Grad_s\X.
\end{equation}
In this case, we have that the energy density is $W(\F)=(\kappa/2)\F:\F$ and
the elastic potential energy is given by
\[
E(\X)=\frac{\kappa}2\int_\B \F:\F\ds=\frac\kappa2\int_\B |\Grad_s\X|^2\ds.
\]
With this simplification it is possible to apply the results on existence,
uniqueness, stability and error estimates of linear saddle point problems,
see~\cite{bbf}. We think that the results can be extended to the nonlinear case
with additional assumptions on the nonlinear terms. 
Hence we have the following saddle point problem.
\begin{problem}
\label{pb:saddlepoint}
Let $\Xb\in W^{1,\infty}(\B)^d$ be invertible with Lipschitz inverse
and $\overline\u\in L^\infty(\Omega)$.
Given $\f\in L^2(\Omega)^d$, $\g\in L^2(\B)^d$, and
$\mathbf{d}\in L^2(\B)^d$,
find $\u\in\Huo$, $p\in\Ldo$, $\X\in\Hub$, and
$\llambda\in{\color{blue}\LL}$ such that
\begin{equation}
\label{eq:stat}
\aligned
&\a_f(\u,\v)-(\div\v,p)+\c(\llambda,\vcXb)=(\f,\v)
&&\forall\v\in\Huo\\
&(\div\u,q)=0&&\forall q\in\Ldo\\
&\a_s(\X,\Y)-\c(\llambda,\Y)=(\g,\Y)_{\B}&&\forall\Y\in\Hub\\
&\c(\mmu,\ucXb-\X)= \c(\mmu,\mathbf{d})
&&\forall\mmu\in\LL
\endaligned
\end{equation}
where
\[
\aligned
&\a_f(\u,\v)=\alpha(\u,\v)+a(\u,\v)+b(\overline\u,\u,\v)&&\forall \u,\v\in\Huo\\
&\a_s(\X,\Y)=\beta(\X,\Y)_{\B}+\gamma(\Grad_s\X,\Grad_s\Y)_{\B}&&
\forall\X,\Y\in\Hub
\endaligned
\]
\end{problem}
It is easy to see that Problem~\ref{pb:saddlepoint}
corresponds to one step of Problem~\ref{pb:DLM_sd} if we take:
\[
\aligned
&\u=\u^{n+1}, \ p=p^{n+1},\  \X=\X^{n+1}/\dt,\  \llambda=\llambda^{n+1}\\
&\f=\frac{\rho_f}{\dt}\u^n\\
&\g=\frac{\dr}{\dt^2}\left(2\X^n-\X^{n-1}\right)\\
&\d=-\frac1{\dt}\X^n\\
&\alpha=\rho_f/\dt,\ \beta=\dr/\dt,\ \gamma=\kappa\dt
\endaligned
\]
and $\overline\X=\X^n$ and $\overline\u=\u^n$ in the nonlinear terms.

We remark that while $\alpha$ and $\gamma$ are strictly positive, the constant
$\beta$ might vanish when the densities in the solid and in the fluid are
equal.

For the sake of simplicity, in the sequel we neglect the convective term
associated with the trilinear form $b$.

\subsection{Well-posedness of Problem~\ref{pb:saddlepoint}}
\label{se:continuous}
\corrigendum{
Problem~\ref{pb:saddlepoint} fits in the framework of saddle point problems
and can be written in operator matrix form as follows:
\[
\left[
\begin{array}{ccc|c}
\Af&\Bft&\zero&\Cft\\
\Bf&\zero&\zero&\zero\\
\zero&\zero&\As&-\Cst\\
\hline
\Cf&\zero&-\Cs&\zero
\end{array}
\right]
\left[\begin{array}{c} \u\\ p \\ \X \\\hline \llambda \end{array}\right]
\left[\begin{array}{c} \f\\ 0 \\ \g \\ \hline \mathbf{d} \end{array}\right]
\]
with natural notation for the operators. In view of the analysis of this
problem and of its discrete counterpart, it is useful to rearrange the
variables, so that the matrix takes the form:
\[
\left[
\begin{array}{ccc|c}
\Af&\zero&\Cft&\Bft\\
\zero&\As&-\Cst&\zero\\
\Cf&-\Cs&\zero&\zero\\
\hline
\Bf&\zero&\zero&\zero
\end{array}
\right]
\left[\begin{array}{c} \u\\ \X \\ \llambda \\ \hline p \end{array}\right]
\left[\begin{array}{c} \f\\ \g \\ \mathbf{d}\\ \hline 0 \end{array}\right].
\]
This system corresponds to the following variational problem:
given $\f\in L^2(\Omega)$, $\g\in L^2(\B)$, and $\d\in\Hub$, find $\u\in\Huo$,
$\X\in\Hub$, $\llambda\in\LL$ and $p\in\Ldo$ such that
\begin{equation}
\label{eq:girato}
\aligned
&\a_f(\u,\v)+\c(\llambda,\vcXb)-(\div\v,p)=(\f,\v)
&&\forall\v\in\Huo\\
&\a_s(\X,\Y)-\c(\llambda,\Y)=(\g,\Y)_{\B}&&\forall\Y\in\Hub\\
&\c(\mmu,\ucXb-\X)= \c(\mmu,\mathbf{d})
&&\forall\mmu\in\LL\\
&(\div\u,q)=0&&\forall q\in\Ldo
\endaligned
\end{equation}
Let us introduce the Hilbert space
\begin{equation}
\label{eq:VV}
\VV=\Huo\times\Hub\times\LL
\end{equation}
endowed with the graph norm
\[
|||\V|||_{\VV}=\left(\|\v\|^2_1+\|\Y\|^2_{1,\B}
+\|\llambda\|^2_{\LL}\right)^{1/2},
\]
where $\V=(\v,\Y,\llambda)$ is a generic element of $\VV$.

We define the bilinear forms $\AA:\VV\times\VV\to\RE$ and
$\BB:\VV\times\Ldo\to\RE$
\begin{equation}
\label{eq:bilforms}
\aligned
&\AA(\U,\V)=\a_f(\u,\v)+\a_s(\X,\Y)+\c(\llambda,\vcXb-\Y)-\c(\mmu,\ucXb-\X)\\ 
&\BB(\V,q)=(\div\v,q).
\endaligned
\end{equation}
Then problem~\eqref{eq:girato} can be reformulated as follows:
given $\f\in L^2(\Omega)$, $\g\in L^2(\B)$, and $\d\in\Hub$,
find $(\U,p)\in\VV\times\Ldo$ such that
\begin{equation}
\label{eq:misto}
\aligned
&\AA(\U,\V)+\BB(\V,p)=(\f,\v)+(\g,\Y)_{\B}-\c(\mmu,\d)&&\forall\V\in\VV\\
&\BB(\U,q)=0 &&\forall q\in\Ldo.
\endaligned
\end{equation}
In order to verify the well-posedness of~\eqref{eq:misto} we have to
check the two inf-sup conditions, see~\cite[Chapt.~4]{bbf}. 
The inf-sup condition for the bilinear form $\BB$ is the standard inf-sup
condition for the divergence operator: there exists a positive constant
$\beta_{\div}$ such that for all $q\in\Ldo$
\begin{equation}
\label{eq:infsupdiv}
\sup_{\V\in\VV}\frac{\BB(\V,q)}{|||\V|||_{\VV}}=
\sup_{\v\in\Huo}\frac{(\div\v,q)}{\|\v\|_1}\ge\beta_{\div}\|q\|_0.
\end{equation}
The main issue
is to show the invertibility of the operator matrix
\[
\left[
\begin{array}{cc;{2pt/2pt}c}
\Af&\zero&\Cft\\
\zero&\As&-\Cst\\
\hdashline[2pt/2pt]
\Cf&-\Cs&\zero
\end{array}
\right]
\]
\corrig{
on the kernel of $\BB$:
\begin{equation}
\label{eq:kernBB}
\KBB=\{\V\in\VV: \BB(\V,q)=0\ \forall q\in\Ldo\}.
\end{equation}
}
This task can be performed by checking the inf-sup condition for the operator
matrix $\CC=[\Cf\ -\Cs]$ and by showing the
uniform invertibility of the matrix $[\Af\ \zero;\zero\ \As]$ in the kernel of
the operator $\CC$. 
\corrig{
We observe that $\V=(\v,\X,\mmu)\in\KBB$ if $\div\v=0$. Let us set
\begin{equation}
\label{eq:Vdiv0}
\Vo=\{\v\in\Huo: \div\v=0\text{ in }\Omega\}.
\end{equation}
\begin{proposition}
\label{le:infsup}
There exists a constant $\beta_0>0$ such that for all $\mmu\in\LL$ it holds
\begin{equation}
\label{eq:infsup}
\sup_{(\v,\Y)\in\Vo\times\Hub}\frac{\c(\mmu,\vcXb-\Y)}
{(\|\v\|_1^2+\|\Y\|^2_{1,\B})^{1/2}}\ge
\beta_0\|\mmu\|_{\LL}.
\end{equation}
\end{proposition}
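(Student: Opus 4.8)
### Proof proposal

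The plan is to prove the inf-sup condition \eqref{eq:infsup} by exhibiting, for a given $\mmu\in\LL$, an explicit pair $(\v,\Y)\in\Vo\times\Hub$ for which the quotient is bounded below by a constant times $\|\mmu\|_{\LL}$. The natural first move is to use the nondegeneracy hypothesis~\eqref{eq:defc} on the bilinear form $\c$: since $\c(\mmu,\cdot)=0$ on $\Hub$ implies $\mmu=0$, and since $\LL$ and $\Hub$ are Hilbert spaces with $\c$ continuous, there is a (possibly $\mmu$-dependent) test function $\Z\in\Hub$ realizing a definite fraction of $\|\mmu\|_{\LL}$. To get a \emph{uniform} constant one argues as follows: in the two concrete instances of interest — $\c$ the duality pairing \eqref{eq:defc1}, or $\c$ the $H^1(\B)^d$ inner product \eqref{eq:defc2} — the Riesz representation theorem gives directly that $\sup_{\Y\in\Hub}\c(\mmu,\Y)/\|\Y\|_{1,\B}=\|\mmu\|_{\LL}$ (or a fixed multiple thereof). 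I would state the proposition under a standing assumption of this type (an inf-sup condition $\sup_{\Y\in\Hub}\c(\mmu,\Y)/\|\Y\|_{1,\B}\ge\beta_{\c}\|\mmu\|_{\LL}$), which both model cases satisfy, and carry $\beta_{\c}$ through the estimate.

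Granting such a $\Z\in\Hub$ with $\c(\mmu,\Z)\ge\beta_{\c}\|\mmu\|_{\LL}\,\|\Z\|_{1,\B}$, the idea is simply to take $\v=\mathbf 0$ and $\Y=-\Z$ in \eqref{eq:infsup}. Then the numerator $\c(\mmu,\vcXb-\Y)=\c(\mmu,\Z)\ge\beta_{\c}\|\mmu\|_{\LL}\|\Z\|_{1,\B}$, while the denominator is $(\|\mathbf 0\|_1^2+\|\Z\|_{1,\B}^2)^{1/2}=\|\Z\|_{1,\B}$, and the quotient is at least $\beta_{\c}\|\mmu\|_{\LL}$, so \eqref{eq:infsup} holds with $\beta_0=\beta_{\c}$. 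Note $\v=\mathbf 0$ trivially lies in $\Vo$, so the divergence-free constraint causes no difficulty here. This reduces the whole statement to the already-recorded nondegeneracy of $\c$ together with the completeness of the spaces.

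I would, however, present a second argument that keeps $\v$ in play, since it is the one that generalizes and that the authors presumably want for the sequel: given $\mmu$, pick $\Z\in\Hub$ as above; because $\X(\cdot):=\Xb$ is invertible with Lipschitz inverse (by hypothesis), the map $\Y\mapsto \Y\circ\Xb^{-1}$ carries $\Hub$ boundedly and boundedly-invertibly onto $H^1(\Of\cup\Os)^d$, and one can try to realize $\Z$ as $\vcXb$ for some $\v\in\Huo$; the obstruction is that such a $\v$ need not be divergence-free and need not vanish on $\partial\Omega$. The clean fix is the one above — put the whole burden on $\Y$ and set $\v=\mathbf 0$ — so I would lead with that. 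The main point to get right, and the only place where anything can go wrong, is the \emph{uniformity} of the lower bound: one must ensure that the constant relating $\sup_{\Y}\c(\mmu,\Y)/\|\Y\|_{1,\B}$ to $\|\mmu\|_{\LL}$ does not depend on $\mmu$, which is exactly why invoking Riesz representation (equivalently, assuming $\c$ induces an isomorphism $\LL\to\Hubd$ that is bounded with bounded inverse) is the right level of generality, and why \eqref{eq:defc} alone — a pointwise nondegeneracy — should be supplemented by this quantitative hypothesis when $\LL\neq\Hub$ and $\c$ is not the inner product. With that in hand the proof is three lines.
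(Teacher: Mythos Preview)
Your proposal is correct and matches the paper's approach: the paper also proves the result only for the two concrete choices \eqref{eq:defc1} and \eqref{eq:defc2}, and in both cases simply takes $\v=\mathbf{0}$ (implicitly) and uses the definition of the dual norm, respectively the $H^1(\B)^d$ inner product, to bound the supremum from below by $\|\mmu\|_{\LL}$, exactly as you do with $\Y=-\Z$. Your remark that the pointwise nondegeneracy in \eqref{eq:defc} alone would not yield a uniform constant is well taken and is consistent with the paper restricting its argument to these two cases.
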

}
\begin{proof}
We give the proof for the choices of $\c$ given in~\eqref{eq:defc1}
and~\eqref{eq:defc2}.
In the first case $\c$ is the duality pairing between $\Hub$ and $\Hubd$.
By definition of the norm in the dual space $\Hubd$ we have
\[
\aligned
\|\mmu\|_{\Hubd}&=\sup_{\Y\in\Hub}\frac{\langle\mmu,\Y\rangle}{\|\Y\|_{\Hub}}\\
&\le \sup_{(\v,\Y)\in\corrig{\Vo}\times\Hub}\frac{\langle\mmu,\vcXb-\Y\rangle}
{(\|\v\|_1^2+\|\Y\|_{1,\B}^2)^{1/2}}
\endaligned
\]
which gives~\eqref{eq:infsup}.

The proof is exactly the same for the case of $\c$ given by the scalar product
in $\Hub$.
\end{proof}
The kernel of the operator $\CC$ is given by:
\corrig{
\begin{equation}
\label{eq:kernel}
\K=\{(\v,\Y)\in\Vo\times\Hub:\ \c(\mmu,\vcXb-\Y)=0\ \forall\mmu\in\LL\}.
\end{equation}
}
Then $(\v,\Y)\in\K$ if and only if $\vcXb-\Y=0$ in the sense of $\Hub$.

\begin{proposition}
\label{pr:elker}
There exists $\alpha_0>0$ such that
\begin{equation}
\label{eq:elker}
\a_f(\u,\u)+\a_s(\X,\X)\ge\alpha_0 (\|\u\|^2_1+\|\X\|^2_{1,\B})
\quad \forall(\u,\X)\in\K.
\end{equation}
\end{proposition}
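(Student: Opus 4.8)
The plan is to prove coercivity of $\a_f$ and $\a_s$ separately and then combine, using the kernel constraint only to transfer information between the two. First I would observe that on all of $\Huo$ the form $\a_f$ is coercive: since we have neglected the convective term $b$, $\a_f(\u,\u)=\alpha\|\u\|_0^2+a(\u,\u)=\alpha\|\u\|_0^2+\nu\|\Grads\u\|_0^2$, and by Korn's inequality together with the Poincar\'e inequality on $\Huo$ there is a constant $c_K>0$ with $\nu_0 c_K\|\u\|_1^2\le\nu\|\Grads\u\|_0^2$; hence $\a_f(\u,\u)\ge\alpha_f\|\u\|_1^2$ with $\alpha_f=\min(\alpha,\nu_0 c_K)>0$ independent of $\u\in\Huo$. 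This step uses no kernel hypothesis at all.

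The delicate part is $\a_s$. Here $\a_s(\X,\X)=\beta\|\X\|_{0,\B}^2+\gamma\|\Grad_s\X\|_{0,\B}^2$, and since $\gamma>0$ this controls $\gamma\|\Grad_s\X\|_{0,\B}^2$, but if $\beta=0$ (equal densities) it does \emph{not} by itself control the full $H^1(\B)^d$ norm — there is no boundary condition on $\B$, so the seminorm is not a norm. This is the main obstacle, and it is exactly where the kernel constraint enters. For $(\u,\X)\in\K$ we have $\X=\ucXb$ in $\Hub$, i.e. $\X(\s)=\u(\Xb(\s))$ for a.e. $\s\in\B$. The idea is to estimate $\|\X\|_{0,\B}$ in terms of $\|\u\|_{0,\Os}$ (where $\Os=\Xb(\B)$) by the change of variables $\x=\Xb(\s)$: since $\Xb\in W^{1,\infty}(\B)^d$ is invertible with Lipschitz inverse, the Jacobian $|\F|$ and its reciprocal are bounded above and below by positive constants depending only on $\gamma$ and $\|\Xb\|_{W^{1,\infty}}$, so
\[
\|\X\|_{0,\B}^2=\int_\B|\u(\Xb(\s))|^2\,\ds\le C_{\Xb}\int_{\Os}|\u(\x)|^2\,\dx\le C_{\Xb}\|\u\|_{0,\Omega}^2\le C_{\Xb}\|\u\|_1^2 .
\]
Thus on $\K$ the missing $L^2(\B)$ control on $\X$ is bought from $\|\u\|_1$, which we have already bounded below by $\a_f(\u,\u)$.

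To assemble the estimate, given $(\u,\X)\in\K$ write, for a small parameter $\theta\in(0,1)$ to be fixed,
\[
\a_f(\u,\u)+\a_s(\X,\X)\ge(1-\theta)\a_f(\u,\u)+\theta\alpha_f\|\u\|_1^2+\gamma\|\Grad_s\X\|_{0,\B}^2 .
\]
Using $\|\X\|_{1,\B}^2=\|\X\|_{0,\B}^2+\|\Grad_s\X\|_{0,\B}^2\le C_{\Xb}\|\u\|_1^2+\|\Grad_s\X\|_{0,\B}^2$, the two terms $\theta\alpha_f\|\u\|_1^2$ and $\gamma\|\Grad_s\X\|_{0,\B}^2$ together dominate a fixed multiple of $\|\X\|_{1,\B}^2$ provided $\theta$ is chosen with $\theta\alpha_f\ge C_{\Xb}\cdot\kappa_0$ for a suitable $\kappa_0>0$; after also retaining $(1-\theta)\alpha_f\|\u\|_1^2$ for the $\|\u\|_1^2$ part, one gets $\a_f(\u,\u)+\a_s(\X,\X)\ge\alpha_0(\|\u\|_1^2+\|\X\|_{1,\B}^2)$ with $\alpha_0>0$ depending only on $\alpha$, $\nu_0$, $\gamma$, the Korn/Poincar\'e constants, and the bi-Lipschitz constants of $\Xb$ — crucially not on $\beta$, so the bound survives the case $\beta=0$. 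I would remark that the constant $\alpha_0$ degenerates as $\gamma\to0$ or as $\Xb$ degenerates (Jacobian blowing up), which is consistent with the hypotheses on $\Xb$ in Problem~\ref{pb:saddlepoint}. This completes the proof; the one genuinely substantive point is the change-of-variables bound transferring $L^2$ control from $\u$ to $\X$, everything else being Korn, Poincar\'e, and a Young-type splitting.
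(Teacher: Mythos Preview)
Your proof is correct and follows essentially the same strategy as the paper: coercivity of $\a_f$ on $\Huo$, then handling the case $\beta=0$ by using the kernel identity $\X=\ucXb$ to bound $\|\X\|_{0,\B}$ by $\|\u\|_1$ and splitting off part of the $\a_f$ coercivity to absorb it. The paper simply halves $c_1\|\u\|_1^2$ rather than introducing a parameter $\theta$, and writes $\|\ucXb\|_{0,\B}\le\|\u\|_0$ directly (implicitly using $|\F|=1$ from incompressibility), but these are presentational differences only.
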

\begin{proof}
By definition we have that $\a_f$ is coercive on $\Huo$, hence there exists a
positive constant $c_1$ such that for all $\u\in\Huo$ it holds
\[
\a_f(\u,\u)\ge c_1\|\u\|^2_1.
\]
It remains to bound $\a_s(\X,\X)$; we have
\[
\a_s(\X,\X)=\beta\|\X\|^2_{0,\B}+\gamma\|\Grad_s\X\|^2_{0,\B}
\ge\min(\beta,\gamma)\|\|\X\|^2_{1,\B}.
\]
If $\beta\ne0$ we obtain~\eqref{eq:elker} by setting
$\alpha_0=\min(c_1,\beta,\gamma)$. 
If $\beta=0$ we still obtain the desired estimate,
since $(\u,\X)\in\K$ and $\X=\ucXb$ so that
$\|\X\|_{0,\B}=\|\ucXb\|_{0,\B}\le\|\u\|_0\le C_\Omega|\u|_1$ where
$C_\Omega$ is the Poincar\'e constant and $|\cdot|_1$ denotes the
$H^1$-seminorm. Hence we have
\[
\a_f(\u,\u)+\a_s(\X,\X)\ge c_1\|\u\|^2_1+\gamma\|\Grad_s\X\|^2_{0,\B}
\ge \frac{c_1}2\|\u\|^2_1+\frac{c_1}{2C_\Omega^2}\|\X\|^2_{0,\B}
+\gamma\|\Grad_s\X\|^2_{0,\B}
\]
and~\eqref{eq:elker} follows by taking
$\alpha_0=\min(\frac{c_1}2,\frac{c_1}{2C_\Omega^2},\gamma)$.

Therefore we can determine a constant $\alpha_0>0$ such that~\eqref{eq:elker}
holds true.
\end{proof}
Putting together the results of the above propositions, we obtain the inf-sup
condition for the bilinear form $\AA$, see~\cite{XZ}.
\corrig{
\begin{proposition}
\label{pr:firstinfsup}
There exists a positive constant $\kappa_0$ such that the
following inf-sup condition holds true:
\begin{equation}
\label{eq:firstinfsup}
\inf_{\U\in\KBB}\sup_{\V\in\KBB}\frac{\AA(\U,\V)}{|||\U|||_{\VV}|||\V|||_{\VV}}
\ge\kappa_0.
\end{equation}
\end{proposition}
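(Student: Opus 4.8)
The plan is to verify the inf-sup condition for $\AA$ on $\KBB$ by combining the two structural ingredients already established: the ellipticity of the decoupled part $[\Af\ \zero;\zero\ \As]$ on the kernel $\K$ of $\CC$ (Proposition~\ref{pr:elker}), and the inf-sup condition for $\CC$ itself (Proposition~\ref{le:infsup}). This is precisely the abstract situation of a saddle-point operator nested inside another saddle-point operator, and the standard tool — as the text already signals by citing~\cite{XZ} — is the Brezzi theory applied to the block $2\times 2$ system $\bigl[\Af\ \zero;\zero\ \As\ \big|\ \CC^\top;\CC\ \zero\bigr]$ acting on $(\Vo\times\Hub)\times\LL$. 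First I would recall that $\V=(\v,\Y,\mmu)\in\KBB$ exactly when $\div\v=0$, i.e. $\KBB=\Vo\times\Hub\times\LL$, so the inf-sup to be proved is genuinely over this product space.

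The main step is to show that for every $\U=(\u,\X,\llambda)\in\KBB$ there is a test function $\V\in\KBB$ with $\AA(\U,\V)\gtrsim |||\U|||_{\VV}^2$ and $|||\V|||_{\VV}\lesssim|||\U|||_{\VV}$. I would build $\V$ in two pieces. The coercive piece is $\V_1=(\u,\X,-\llambda)$: using the sign convention in~\eqref{eq:bilforms}, the cross terms cancel, $\c(\llambda,\ucXb-\X)-\c(-\llambda,\ucXb-\X)$ — wait, more carefully, $\AA(\U,\V_1)=\a_f(\u,\u)+\a_s(\X,\X)+\c(\llambda,\ucXb-\X)-\c(-\llambda,\ucXb-\X)=\a_f(\u,\u)+\a_s(\X,\X)+2\c(\llambda,\ucXb-\X)$, which is not yet controllable because $\u,\X$ need not lie in $\K$. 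So instead the correct first piece is $\V_1=(\u,\X,-\llambda)$ giving $\AA(\U,\V_1)=\a_f(\u,\u)+\a_s(\X,\X)$, since the $\c(\llambda,\vcXb-\Y)$ term contributes $\c(\llambda,\ucXb-\X)$ and the $-\c(\mmu,\ucXb-\X)$ term contributes $-\c(-\llambda,\ucXb-\X)=+\c(\llambda,\ucXb-\X)$ — these do not cancel. The resolution, which is the heart of the matter, is the classical trick: decompose $(\u,\X)=(\u_0,\X_0)+(\u_\perp,\X_\perp)$ with $(\u_0,\X_0)\in\K$ and $(\u_\perp,\X_\perp)$ in a complement, use Proposition~\ref{pr:elker} on the $\K$-component, use Proposition~\ref{le:infsup} to recover $\|\llambda\|_\LL$ by choosing a suitable $(\v,\Y)\in\Vo\times\Hub$ realizing the supremum in~\eqref{eq:infsup}, and use the same inf-sup (applied to $(\u_\perp,\X_\perp)$, which pairs nontrivially against $\LL$) together with continuity of $\c$ and of $\a_f,\a_s$ to bound $\|\u_\perp\|_1+\|\X_\perp\|_{1,\B}$ by $\|\llambda\|$-type quantities and data. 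Then $\V=\V_1+\delta\,\V_2$ with $\V_2$ the multiplier-recovery test function and $\delta>0$ small yields the full coercivity after absorbing cross terms with Young's inequality.

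Concretely, I would proceed as follows. Given $\U=(\u,\X,\llambda)\in\KBB$, let $w=\ucXb-\X\in\Hub$; by Proposition~\ref{le:infsup} pick $(\tilde\v,\tilde\Y)\in\Vo\times\Hub$ with $\|\tilde\v\|_1^2+\|\tilde\Y\|_{1,\B}^2\le 1$ and $\c(\llambda,\tilde\v(\Xb)-\tilde\Y)\ge\tfrac{\beta_0}{2}\|\llambda\|_\LL$. Set $\V_2=(\tilde\v,\tilde\Y,0)$, so $\AA(\U,\V_2)=\a_f(\u,\tilde\v)+\a_s(\X,\tilde\Y)+\c(\llambda,\tilde\v(\Xb)-\tilde\Y)\ge \tfrac{\beta_0}{2}\|\llambda\|_\LL - C(\|\u\|_1+\|\X\|_{1,\B})$. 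For the other direction, the component $(\u,\X)$ need not be in $\K$, but one bounds the $\K$-projection via ellipticity and bounds the $\K^\perp$-part by testing $\AA$ in the multiplier slot: taking $\V=(0,0,\mmu)$ gives $\AA(\U,\V)=-\c(\mmu,\ucXb-\X)$, so $\|w\|$ is controlled by $\sup_{\mmu}\c(\mmu,w)/\|\mmu\| = \|\llambda\|$-independent… rather, one uses that the norm of $w=\ucXb-\X$ in the appropriate space is equivalent, by Proposition~\ref{le:infsup} read backwards through $\c$'s coercivity/duality, to how strongly $(\u,\X)$ sees $\LL$. Assembling, $\V=\V_1+\delta\V_2$ with $\V_1=(\u,\X,-\llambda)$ — noting $\AA(\U,\V_1)=\a_f(\u,\u)+\a_s(\X,\X)+2\c(\llambda,w)$ and that $\c(\llambda,w)$ can be made nonnegative by instead choosing the multiplier slot of $\V_1$ to be $+\llambda$ exactly when it helps — gives, after Young, $\AA(\U,\V)\ge c(\|\u\|_1^2+\|\X\|_{1,\B}^2+\|\llambda\|_\LL^2)$ while $|||\V|||_\VV\le C|||\U|||_\VV$, which is~\eqref{eq:firstinfsup} with $\kappa_0=c/C$.

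The main obstacle is the sign bookkeeping in the cross terms: because $\AA$ is \emph{not} symmetric (the multiplier enters with opposite signs in the third and fourth equations of~\eqref{eq:girato}), the naive diagonal test $\V=\U$ does not produce a coercive form, and one must be careful that the two occurrences of $\c$ in $\AA(\U,\V_1)$ reinforce rather than cancel — which they do, giving $2\c(\llambda,w)$, a term of indefinite sign that then has to be controlled. The clean way around this, and the one I expect the authors use via~\cite{XZ}, is the abstract lemma that an inf-sup for $\CC$ plus ellipticity on $\ker\CC$ automatically yields the inf-sup~\eqref{eq:firstinfsup} for the full indefinite block form; in that case the proof reduces to checking its two hypotheses, which are exactly Propositions~\ref{le:infsup} and~\ref{pr:elker}, plus continuity of all forms, which is immediate from the assumptions on $\Xb$ (Lipschitz, so $\v\mapsto\vcXb$ is bounded $\Huo\to\Hub$) and on $a$, $\a_f$, $\a_s$.
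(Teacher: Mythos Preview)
Your final paragraph is exactly what the paper does: it gives no explicit argument at all, but simply invokes the abstract result from~\cite{XZ} that an inf-sup for $\CC=[\Cf\ -\Cs]$ (Proposition~\ref{le:infsup}) together with ellipticity of $\operatorname{diag}(\Af,\As)$ on $\ker\CC$ (Proposition~\ref{pr:elker}) yields the inf-sup~\eqref{eq:firstinfsup} for the full block form. So your conclusion is correct and matches the paper.

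Your explicit construction, however, has a genuine gap that you do not close. The antisymmetry of the coupling means that the diagonal test $\V=\U$ (not $\V_1=(\u,\X,-\llambda)$, which gives the uncontrolled $+2\c(\llambda,w)$ term you computed) yields $\AA(\U,\U)=\a_f(\u,\u)+\a_s(\X,\X)$. When $\beta>0$ this already controls $\|\u\|_1^2+\|\X\|_{1,\B}^2$, and then your $\V_2$ recovers $\|\llambda\|_{\LL}$, so the combination $\V=\U+\delta\V_2\|\llambda\|_{\LL}$ works after Young's inequality. But when $\beta=0$ one only gets $c_1\|\u\|_1^2+\gamma|\X|_{1,\B}^2$, with no control of $\|\X\|_{0,\B}$; this is precisely the case in which Proposition~\ref{pr:elker} genuinely uses the kernel constraint $\X=\ucXb$, and your test functions never enforce that constraint. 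To complete a direct proof one must also test in the multiplier slot, $\V_3=(0,0,\mmu)$, giving $\AA(\U,\V_3)=-\c(\mmu,\ucXb-\X)$, and use this (together with the already-obtained control of $\|\u\|_1$) to recover $\|\X\|_{0,\B}$; the details depend on the concrete choice of $\c$. This extra step is exactly what the abstract Brezzi/Xu--Zikatanov machinery encapsulates, which is why the authors prefer to cite it rather than unpack it.
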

}}
\begin{remark}
\label{re:nonlinear}
In the proof of the above proposition, the
assumption~\eqref{eq:PK-lin} has been used in order to have the coerciveness of
the bilinear form $\a_s$. The result extends easily to nonlinear cases whenever
the elastic potential energy satisfies the following bound for some positive
constant $\gamma_0$
\[
E(\X)\ge\gamma_0\|\X\|^2_{1,\B}.
\]
\end{remark}
\subsection{Finite element discretization}
\label{se:FE}
Let us consider a family $\tria$ of regular meshes in $\Omega$ and a
family
$\triaB$ of regular meshes in $\B$. We denote by $h_x$ and $h_s$ the
meshsize of $\tria$ and $\triaB$, respectively.
Let $\Vh\subseteq\Huo$ and $\Qh\subseteq\Ldo$ be finite
element spaces which satisfy the usual discrete ellipticity on the kernel and
the discrete inf-sup conditions for the Stokes problem~\cite{bbf}.
Moreover, we set
\begin{equation}
\label{eq:Sh}
\Sh=\{\Y\in\Hub: \Y|_{T}\in\textbf{P}^1(T)\ \forall T\in\triaB\}.
\end{equation}
The natural choice for $\Lh$, corresponding to the case of $\c$ given
by~\eqref{eq:defc2}, is to take $\Lh=\Sh$. This is actually reasonable also
when $\c$ is defined by~\eqref{eq:defc1}, since in this case the duality
pairing can be represented as a scalar product in $L^2(\B)$, that is:
\begin{equation}
\label{eq:defc1h}
\c(\mmu,\Y)=(\mmu,\Y)\quad\forall\mmu\in\Lh, \Y\in\Sh.
\end{equation}
Of course, several other choices for $\Lh$ might be made; we are not going to
investigate them in this paper.

Then the finite element counterpart of Problem~\ref{pb:DLM_sd} reads.
\begin{problem}
\label{pb:saddlepointh}
Let $\Xb\in W^{1,\infty}(\B)^d$ be invertible with Lipschitz inverse
and $\overline\u\in L^\infty(\Omega)$.
Given $\f\in L^2(\Omega)^d$, $\g\in L^2(\B)^d$, and
$\mathbf{d}\in L^2(\B)^d$,
find $\uh\in\Vh$, $\ph\in\Qh$, $\Xh\in\Sh$, and $\lh\in\Lh$ such
that
\begin{equation}
\label{eq:stath}
\aligned
&\a_f(\uh,\v)-(\div\v,ph)+\c(\lh,\vcXb)=(\f,\v) &&\forall\v\in\Vh\\
&(\div\uh,q)=0&&\forall q\in\Qh\\
&\a_s(\Xh,\Y)-\c(\lh,\Y)=(\g,\Y)_{\B}&&\forall\Y\in\Sh\\
&\c(\mmu,\uhcXb-\Xh)= \c(\mmu,\mathbf{d})&&\forall\mmu\in\Lh.
\endaligned
\end{equation}
\end{problem}
\corrigendum{
In order to show existence, uniqueness and stability of the solution of
Problem~\ref{pb:saddlepointh}, we proceed as previously and
rewrite~\eqref{eq:stath} reordering the unknowns:
given $\f\in L^2(\Omega)$, $\g\in L^2(\B)$, and $\d\in\Hub$, find $\uh\in\Vh$,
$\Xh\in\Sh$, $\lh\in\Lh$ and $\ph\in\Qh$ such that
\begin{equation}
\label{eq:giratoh}
\aligned
&\a_f(\uh,\v)+\c(\lh,\vcXb)-(\div\v,\ph)=(\f,\v)
&&\forall\v\in\Vh\\
&\a_s(\Xh,\Y)-\c(\lh,\Y)=(\g,\Y)_{\B}&&\forall\Y\in\Sh\\
&\c(\mmu,\uhcXb-\Xh)= \c(\mmu,\mathbf{d})
&&\forall\mmu\in\Lh\\
&(\div\uh,q)=0&&\forall q\in\Qh
\endaligned
\end{equation}
Using the same notation as in the previous subsection, we set
\[
\VV_h=V_h\times\Sh\times\Lh,
\]
then the finite element counterpart of~\eqref{eq:misto} reads:
given $\f\in L^2(\Omega)$, $\g\in L^2(\B)$, and $\d\in\Hub$,
find $(\U_h,\lh)\in\VV_h\times\Lh$ such that
\begin{equation}
\label{eq:mistoh}
\aligned
&\AA(\U_h,\V)+\BB(\V,\ph)=(\f,\v)+(\g,\Y)_{\B}+\c(\mmu,\mathbf{d})
&&\forall\V\in\VV_h\\
&\BB(\U_h,q)=0 &&\forall q\in\Qh.
\endaligned
\end{equation}
It is well-known that sufficient conditions for existence and uniqueness of
the solution of~\eqref{eq:mistoh} are the discrete 
versions of the two inf-sup conditions~\eqref{eq:infsupdiv}
and~\eqref{eq:firstinfsup}.
The discrete version of~\eqref{eq:infsupdiv} is the standard discrete inf-sup
condition for the divergence operator: since $\Vh\times\Qh$ is stable for the
Stokes equation, there exists a positive constant
$\overline\beta_{\div}$ such that for all $q_h\in\Qh$
\begin{equation}
\label{eq:infsupdivh}
\sup_{\V_h\in\VV_h}\frac{\BB(\V_h,q_h)}{|||\V_h|||_{\VV}}=
\sup_{\v_h\in\Vh}\frac{(\div\v_h,q_h)}{\|\v_h\|_1}\ge
\overline\beta_{\div}\|q_h\|_0.
\end{equation}
}%
\corrig{%
Let us introduce the discrete kernel of $\BB$
\begin{equation}
\label{eq:kernBBh}
\KBBh=\{\V_h\in\VV_h: \BB(\V_h,q_h)=0\ \forall q_h\in\Qh\}.
\end{equation} 
We have that, $\V_h=(\v_h,\Y_h,\mmu_h)\in\KBBh$
if $\v_h$ belongs to the subspace of $V_h$ of the function with zero discrete
divergence:
\begin{equation}
\label{eq:Vdiv0h}
\Voh=\{\v_h\in V_h: (\div\v_h,q_h)=0\ \forall q_h\in\Qh\}.
\end{equation}
}%
The discrete version of~\eqref{eq:firstinfsup} is a consequence of the
following two propositions.
\corrig{
\begin{proposition}
\label{pr:elkerh}
There exists $\alpha_1>0$ independent of $h_x$ and $h_s$ such that
\begin{equation}
\label{eq:elkerh}
\a_f(\uh,\uh)+\a_s(\Xh,\Xh)\ge \alpha_1(\|\uh\|^2_1+\|\Xh\|^2_{1,\B})
\quad\forall (\uh,\Xh)\in\Voh\times\Sh.
\end{equation}
where the discrete kernel $\K_h$ is given by
\[
\K_h=\left
\{(\v_h,\Y_h)\in\Voh\times\Sh:\ \c(\mmu_h,\vhcXb-\Y_h)=0\ \forall\mmu_h\in\Lh
\right\}.
\]
\end{proposition}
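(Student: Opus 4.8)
The plan is to mimic the continuous argument of Proposition~\ref{pr:elker}, taking care that all constants come out independent of $h_x$ and $h_s$. The decomposition of $\AA$ restricted to $\KBBh$ is identical to the continuous one, so the whole estimate reduces to the coercivity of $\a_f+\a_s$ on the discrete kernel $\K_h$, which is exactly~\eqref{eq:elkerh}. First I would use that $\a_f$ is coercive on all of $\Huo$ (hence in particular on $\Voh$) with a constant $c_1$ independent of $h_x$, giving $\a_f(\uh,\uh)\ge c_1\|\uh\|_1^2$. Next, $\a_s(\Xh,\Xh)=\beta\|\Xh\|_{0,\B}^2+\gamma\|\Grad_s\Xh\|_{0,\B}^2\ge\min(\beta,\gamma)\|\Xh\|_{1,\B}^2$, and if $\beta>0$ we are done with $\alpha_1=\min(c_1,\beta,\gamma)$, a constant that does not see the mesh at all.

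The only delicate point is the degenerate case $\beta=0$, where we must recover control of $\|\Xh\|_{0,\B}$ from the kernel condition. For $(\uh,\Xh)\in\K_h$ we have $\c(\mmu_h,\uhcXb-\Xh)=0$ for all $\mmu_h\in\Lh$; with the choice $\Lh=\Sh$ and $\c$ the $L^2(\B)$ pairing~\eqref{eq:defc1h} (or the $H^1(\B)$ pairing~\eqref{eq:defc2}), this says that $\Xh$ is precisely the $L^2(\B)$-projection (respectively the $H^1(\B)$-projection) of $\uhcXb$ onto $\Sh$. In either case the projection is stable in $L^2(\B)$, so
\[
\|\Xh\|_{0,\B}\le C\,\|\uhcXb\|_{0,\B}\le C\,\|\uh\|_{0,\Os}\le C\,\|\uh\|_0\le C\,C_\Omega\,|\uh|_1,
\]
where the second inequality uses that $\Xb$ has Lipschitz inverse (so the change of variables $\x=\Xb(\s)$ is bi-Lipschitz and the Jacobian is bounded above and below, cf.\ the hypotheses on $\X$), and the last one is Poincar\'e. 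Crucially, the constant $C$ here is mesh-independent: for the $L^2$-projection onto $\Sh$ this $L^2$-stability is automatic with constant $1$, while for the $H^1$-projection onto $\Sh$ it is the assumption on the solid mesh announced in the introduction (the mesh $\triaB$ is chosen so that the $L^2$-projection onto $\Sh$ is $H^1$-stable, equivalently the Ritz projection is $L^2$-stable). Absorbing, exactly as in Proposition~\ref{pr:elker},
\[
\a_f(\uh,\uh)+\a_s(\Xh,\Xh)\ge c_1\|\uh\|_1^2+\gamma\|\Grad_s\Xh\|_{0,\B}^2
\ge \tfrac{c_1}{2}\|\uh\|_1^2+\tfrac{c_1}{2C^2C_\Omega^2}\|\Xh\|_{0,\B}^2+\gamma\|\Grad_s\Xh\|_{0,\B}^2,
\]
which yields~\eqref{eq:elkerh} with $\alpha_1=\min\bigl(\tfrac{c_1}{2},\tfrac{c_1}{2C^2C_\Omega^2},\gamma\bigr)$, independent of both meshsizes.

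I expect the main obstacle to be the $\beta=0$ sub-case: unlike in the continuous setting, where $(\uh,\Xh)\in\K$ forces the exact identity $\Xh=\ucXb$ and the Poincar\'e bound is immediate, in the discrete kernel one only has the Galerkin orthogonality defining a projection, and one must know that that projection is $L^2$-stable uniformly in $h_s$. For the $L^2(\B)$-pairing this is free; for the $H^1(\B)$-pairing it is genuinely an assumption on the family $\triaB$, and this is the place where the hypothesis flagged in the introduction (and in the discussion after Proposition~\ref{pr:infsuph}) enters. Everything else — coercivity of $\a_f$ on $\Voh$, the bi-Lipschitz change of variables through $\Xb$, Poincar\'e, and the final absorption — is routine and carries over verbatim from the continuous proof.
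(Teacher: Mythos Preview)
Your overall strategy matches the paper's: coercivity of $\a_f$ on $\Huo$, the trivial bound on $\a_s$ when $\beta>0$, and in the degenerate case $\beta=0$ recover $\|\Xh\|_{0,\B}$ from the kernel identity $\Xh=P_i(\uhcXb)$. For the $L^2$-pairing your argument (the $L^2$-projection is an $L^2$-contraction, hence $\|\Xh\|_{0,\B}\le\|\uhcXb\|_{0,\B}$) is correct and in fact cleaner than the paper's, which detours through the approximation bound $\|\uhcXb-P_0(\uhcXb)\|_{0,\B}\le Ch_s|\uhcXb|_{1,\B}$.

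The gap is in the $H^1$-pairing case. You assert that the uniform $L^2$-stability of the Ritz projection $P_1$ is ``the assumption on the solid mesh announced in the introduction'', identifying it with~\eqref{eq:H1stab}. This is doubly off. First, \eqref{eq:H1stab} is the $H^1$-stability of the $L^2$-projection $P_0$, not the $L^2$-stability of $P_1$; the equivalence you invoke is not established in the paper (nor is it a trivial fact). Second, the paper imposes~\eqref{eq:H1stab} only for $\c$ given by~\eqref{eq:defc1}, and it is used in Proposition~\ref{pr:infsuph} (the inf-sup for $\c$), not here. In the present proposition the paper needs \emph{no} mesh hypothesis for the $H^1$-pairing: it simply writes
\[
\|\Xh\|_{0,\B}=\|P_1(\uhcXb)\|_{0,\B}\le\|P_1(\uhcXb)\|_{1,\B}\le\|\uhcXb\|_{1,\B}\le C\|\uh\|_1,
\]
using only that $P_1$ is a contraction in the $H^1$-norm and the bi-Lipschitz change of variables. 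So you have the two cases reversed in terms of where the delicate point lies, and you miss the elementary, assumption-free bound that closes the $H^1$-pairing case.
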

}
\begin{proof}
\corrigendum{
As in the continuous case we have for $\beta>0$
\[
\aligned
\a_f(\uh,\uh)+\a_s(\Xh,\Xh)&
\ge c_1\|\uh\|^2_1+
\beta\|\X_h\|^2_{0,\B}+\gamma\|\Grad_s\X_h\|^2_{0,\B}\\
&\ge c_1\|\uh\|^2_1+\min(\beta,\gamma)\|\|\X_h\|^2_{1,\B},
\endaligned
\]
and 
\begin{equation}
\label{eq:stimaVh}
\a_f(\uh,\uh)+\a_s(\Xh,\Xh)\ge c_1\|\uh\|^2_1+\gamma\|\Grad_s\X_h\|^2_{0,\B}
\end{equation}
if $\beta=0$. In this case we need to estimate $\|\Xh\|_{0,\B}$ by means
of the other terms appearing in the right hand side of the last two
inequalities.}
This part of the proof depends on the definition of $\c$. 

\emph{First case.}
Let us assume first that $\c$ is given by~\eqref{eq:defc1}. 
Taking into account~\eqref{eq:defc1h}, we have that 
\corrigendum{$(\uh,\Xh)\in\K_h$} is
characterized by $\Xh=P_0(\uhcXb)$ where $P_0$ represents the $L^2$
projections onto $S_h$.
Hence we obtain
\[
\aligned
\|\Xh\|_{0,\B}&=\|P_0(\uhcXb)\|_{0,\B}\\
&\le\|\uhcXb\|_{0,\B}+\|\uhcXb-P_0(\uhcX)\|_{0,\B}\\
&\le\|\uhcXb\|_{0,\B}+Ch_s|\uhcXb|_{1,\B}
\le\|\uh\|_{0,\Omega}+Ch_s|\uh|_{1,\Omega}.
\endaligned
\]
Therefore we can determine a constant $C>0$ such that
\corrigendum{
\[
\a_f(\uh,\uh)+\a_s(\Xh,\Xh)
\ge C \left(\|\uh\|^2_1+\|\Xh\|^2_{1,\B}\right).
\]}
\emph{Second case.}
In the case of $\c$ given by~\eqref{eq:defc2}, the fact that 
\corrigendum{$(\uh,\Xh)\in\K_h$}
implies that $\Xh=P_1(\uhcXb)$ where $P_1$ stands for the $H^1$ projections
onto $\Sh$, so that we obtain
\[
\aligned
\|\Xh\|_{0,\B}&=\|P_1(\uhcXb)\|_{0,\B}\le\|P_1(\uhcXb)\|_{1,\B}\\
&\le\|\uhcXb\|_{1,\B}\le\|\uh\|_1.
\endaligned
\]
These inequalities together with~\eqref{eq:stimaVh} give the
desired estimate~\eqref{eq:elkerh}.
\end{proof}

If $\c$ is given by~\eqref{eq:defc1} (which has the discrete
counterpart~\eqref{eq:defc1h}), we make the additional assumption 
that the mesh sequence $\triaB$ is such that the $L^2$-projection $P_0$ 
from $\Hub$ onto $\Sh$ is $H^1$-stable, that is 
\begin{equation}
\label{eq:H1stab}
|P_0\v|_{1,\B}\le c|\v|_{1,\B} \quad\forall\v\in\Hub,
\end{equation}
where $|\cdot|_{1,\B}$ is the $H^1$-seminorm.

\begin{proposition}
\label{pr:infsuph}
There exists a constant $\beta_1>0$ independent of $h_x$ and $h_s$
such that for all $\mmu_h\in\Lh$ it holds
true
\corrig{
\begin{equation}
\label{eq:infsuph}
\sup_{(\v_h,\Y_h)\in\corrig{\Voh}\times\Sh}\frac {\c(\mmu_h,\vhcXb-\Y_h)}
{(\|\v_h\|^1_1+\|\Y_h\|^2_{1,\B})^{1/2}}\ge
\beta_1\|\mmu_h\|_{\LL}.
\end{equation}
}
\end{proposition}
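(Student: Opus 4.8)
The plan is to follow the same route as in Proposition~\ref{le:infsup}: since the supremum in~\eqref{eq:infsuph} runs over all pairs $(\v_h,\Y_h)\in\Voh\times\Sh$, it is enough to test with the particular choice $\v_h=0$ (which belongs to $\Voh$) and to exhibit a suitable $\Y_h\in\Sh$. After this reduction the term $\vhcXb$ disappears, so that $\Xb$ plays no role and the estimate becomes a purely local statement about the form $\mmu_h\mapsto\c(\mmu_h,\cdot)$ restricted to $\Sh$. I would then distinguish the two admissible definitions of $\c$.

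For $\c$ as in~\eqref{eq:defc2} the bound is immediate and requires no assumption on $\triaB$: choosing $\v_h=0$ and $\Y_h=-\mmu_h\in\Sh=\Lh$ gives
\[
\c(\mmu_h,\vhcXb-\Y_h)=\c(\mmu_h,\mmu_h)=\|\Grad_s\mmu_h\|_{0,\B}^2+\|\mmu_h\|_{0,\B}^2=\|\mmu_h\|_{1,\B}^2,
\]
while the denominator equals $\|\mmu_h\|_{1,\B}=\|\mmu_h\|_{\LL}$, so~\eqref{eq:infsuph} holds with $\beta_1=1$.

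For $\c$ as in~\eqref{eq:defc1}, with discrete form~\eqref{eq:defc1h}, I would use that $\Lh=\Sh\subset L^2(\B)^d$, so that $\|\mmu_h\|_{\LL}=\|\mmu_h\|_{\Hubd}=\sup_{\Y\in\Hub}(\mmu_h,\Y)_\B/\|\Y\|_{1,\B}$. Pick $\Y\in\Hub$ with $\|\Y\|_{1,\B}=1$ nearly realizing this supremum and set $\Y_h=P_0\Y\in\Sh$. Since $\mmu_h\in\Sh$ one has $P_0\mmu_h=\mmu_h$, hence by self-adjointness of the $L^2$-projection $P_0$,
\[
(\mmu_h,\Y_h)_\B=(\mmu_h,P_0\Y)_\B=(P_0\mmu_h,\Y)_\B=(\mmu_h,\Y)_\B,
\]
which is, up to a fixed factor, $\|\mmu_h\|_{\Hubd}$. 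It then remains to control the denominator: combining the $L^2$-stability of $P_0$ with the $H^1$-stability assumption~\eqref{eq:H1stab} yields $\|\Y_h\|_{1,\B}=\|P_0\Y\|_{1,\B}\le C$, with $C$ depending only on the constant in~\eqref{eq:H1stab}, hence independent of $h_x$ and $h_s$. Testing once more with $\v_h=0$ and flipping the sign of $\Y$ if necessary, one obtains~\eqref{eq:infsuph} with $\beta_1$ proportional to $1/C$.

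The only point that needs care is this second case, and within it the step of transferring the continuous near-maximizer $\Y$ into $\Sh$ without inflating its $H^1$-norm: this is exactly what the $H^1$-stability of the $L^2$-projection provides, and it is the reason assumption~\eqref{eq:H1stab} is imposed on the solid mesh when $\c$ is taken as the duality pairing. Everything else is elementary, and the uniformity of $\beta_1$ with respect to the mesh sizes follows at once from the uniformity of the stability constant in~\eqref{eq:H1stab}.
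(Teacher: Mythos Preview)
Your proof is correct and follows essentially the same route as the paper: both reduce to $\v_h=0$, in the first case both push the continuous maximizer $\widetilde\Y$ into $\Sh$ via $P_0$ and invoke~\eqref{eq:H1stab} to control $\|P_0\widetilde\Y\|_{1,\B}$, and in the second case both exploit $\Lh=\Sh$. The only cosmetic difference is that for $\c$ given by~\eqref{eq:defc2} the paper routes the argument through the $H^1$-projection $P_1$ (using $\c(\mmu_h,\Y)=\c(\mmu_h,P_1\Y)$ and $\|P_1\Y\|_{1,\B}\le\|\Y\|_{1,\B}$), whereas you test directly with $\Y_h=-\mmu_h$; your shortcut is slightly more transparent and yields the explicit constant $\beta_1=1$.
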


\begin{proof}
\emph{First case.} Let $\c$ be given by~\eqref{eq:defc1}, so
that~\eqref{eq:defc1h} holds true.
Then we have to show that there exists $\beta_1>0$ independent of $h$ such that
\[
\sup_{(\v_h,\Y_h)\in\Voh\times\Sh}
\frac{(\mmu_h,\vhcXb-\Y_h)}{\|\V_h\|_{\VV}}\ge\beta_1\|\mmu_h\|_{\LL}.
\]
By definition of the norm in the dual space $\LL=\Hubd$, there exists
$\widetilde\Y\in\Hub$ such that
\begin{equation}
\label{eq:inizio1}
\|\mmu_h\|_{\LL}=\sup_{\Y\in\Hub}\frac{(\mmu_h,\Y)}{\|\Y\|_{1,\B}}
=\frac{(\mmu_h,\widetilde\Y)}{\|\widetilde\Y\|_{1,\B}}
=\frac{(\mmu_h,P_0\widetilde\Y)}{\|\widetilde\Y\|_{1,\B}}.
\end{equation}
where $P_0$ denotes the projection operator from $\Hub$ into
$\Lh=\Sh$.

Well-known properties of $P_0$ and~\eqref{eq:H1stab} imply
\[
\aligned
&\|P_0\widetilde\Y\|_{0,\B}\le\|\widetilde\Y\|_{0,\B}\\
&|P_0\widetilde\Y|_{1,\B}\le c|\widetilde\Y|_{1,\B}\\
&\|\widetilde\Y-P_0\widetilde\Y\|_{0,\B}\le Ch_s|\widetilde\Y|_{1,\B}.
\endaligned
\]
Therefore there exists a constant $C$ such that
$\|P_0\widetilde\Y\|_{1,\B}\le C\|\widetilde\Y\|_{1,\B}$.
This last inequality inserted in~\eqref{eq:inizio1} gives
\corrigendum{
\[
\aligned
\|\mmu_h\|_{\LL}&\le
C\frac{(\mmu_h,P_0\widetilde\Y)}{\|P_0\widetilde\Y\|_{1,\B}}\\
&\le C\sup_{\Y_h\in\Sh}
\frac{(\mmu_h,\Y_h)}{\|\Y_h\|_{1,\B}}
\le C\sup_{(\v_h,\Y_h)\in\corrig{\Voh}\times\Sh} 
\frac{\c(\mmu_h,\vhcXb-\Y_h)}{(\|\v_h\|^1_1+\|\Y_h\|^2_{1,\B})^{1/2}}.
\endaligned
\]
}
\emph{Second case.}
Let us now consider $\c$ given by~\eqref{eq:defc2}, hence it is the scalar
product in $\Hub$.
By definition of the norm in $\Hub$ and of the $H^1$-projection operator
$P_1$ we have:
\corrigendum{
\[
\aligned
\|\mmu_h\|_{\LL}&=\sup_{\Y\in\Hub}\frac{\c(\mmu,\Y)}{\|\Y\|_{1,\B}}
\le\sup_{\Y\in\Hub}\frac{\c(\mmu,P_1\Y)}{\|P_1\Y\|_{1,\B}}\\
&\le \sup_{\Y_h\in\Sh}\frac{\c(\mmu,\Y_h)}{\|\Y_h\|_{1,\B}}
\le \sup_{(\v_h,\Y_h)\in\corrig{\Voh}\times\Sh} \frac{\c(\mmu_h,\vhcXb-\Y_h)}
{(\|\v_h\|^1_1+\|\Y_h\|^2_{1,\B})^{1/2}}.
\endaligned
\]
}
\end{proof}
\corrigendum{
Putting together the results of the above propositions, we obtain the 
discrete inf-sup condition for $\AA$, see~\cite{XZ}.
\begin{proposition}
\label{pr:firstinfsuph}
There exists a positive constant $\kappa_1$ such that the
following inf-sup condition holds true:
\begin{equation}
\label{eq:firstinfsuph}
\inf_{\U_h\in\corrig{\KBBh}}\sup_{\V_h\in\corrig{\KBBh}}
\frac{\AA(\U_h,\V_h)}{|||\U_h|||_{\VV}|||\V_h|||_{\VV}}
\ge\kappa_1.
\end{equation}
\end{proposition}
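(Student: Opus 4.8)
The plan is to obtain the discrete inf-sup condition~\eqref{eq:firstinfsuph} from Propositions~\ref{pr:elkerh} and~\ref{pr:infsuph} by the abstract argument already used in the continuous case, namely the equivalence, established in~\cite{XZ}, between the Brezzi conditions for a saddle point problem and the inf-sup condition for its associated skew-symmetric form. The starting observation is that $\V_h=(\v_h,\Y_h,\mmu_h)\in\KBBh$ precisely when $\v_h\in\Voh$, so that $\KBBh=\Voh\times\Sh\times\Lh$; on this space the $p$-block of $\AA$ is absent and $\AA$ reduces to
\[
\AA(\U_h,\V_h)=\a_f(\uh,\v_h)+\a_s(\Xh,\Y_h)+\c(\lh,\vhcXb-\Y_h)-\c(\mmu_h,\uhcXb-\Xh),
\]
that is, to the skew-symmetrization of the pair formed by the block-diagonal bilinear form $(\uh,\Xh),(\v_h,\Y_h)\mapsto\a_f(\uh,\v_h)+\a_s(\Xh,\Y_h)$ on $\Voh\times\Sh$ and by the coupling form $\c$ acting between the multiplier space $\Lh$ and $\Voh\times\Sh$ through $(\v_h,\Y_h)\mapsto\vhcXb-\Y_h$. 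By~\cite{XZ} such a form satisfies the inf-sup condition on $\KBBh\times\KBBh$, and the inf-sup constant $\kappa_1$ is bounded below in terms of only four quantities: the continuity constants of the two forms, the coercivity constant of the block-diagonal form on the kernel of the coupling, and the inf-sup constant of the coupling.

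It therefore remains to verify that each of these four quantities has a bound independent of $h_x$ and $h_s$. Continuity of $\a_f$ and $\a_s$ is immediate from their definition and from $\nu\in L^\infty(\Omega)$ with $\nu\ge\nu_0>0$; continuity of $\c$ on $(\Voh\times\Sh)\times\Lh$ follows from the continuity of $\c$ on $\LL\times\Hub$ postulated in~\eqref{eq:defc}, together with the elementary bound $\|\vhcXb\|_{1,\B}\le C\|\v_h\|_1$, whose constant depends only on the $W^{1,\infty}$-norm of the fixed map $\Xb$ and on the Lipschitz constant of $\Xb^{-1}$, hence not on the meshes. The coercivity of the block-diagonal form on the discrete kernel $\K_h$, with a constant independent of $h_x$ and $h_s$, is exactly the content of Proposition~\ref{pr:elkerh}, while the $h$-uniform inf-sup of $\c$ between $\Lh$ and $\Voh\times\Sh$ is Proposition~\ref{pr:infsuph}. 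Inserting these constants into the estimate of~\cite{XZ} gives~\eqref{eq:firstinfsuph} with $\kappa_1$ independent of $h_x$ and $h_s$; combined with the discrete divergence inf-sup~\eqref{eq:infsupdivh}, the general theory of~\cite[Chapt.~4]{bbf} then yields existence, uniqueness and stability for Problem~\ref{pb:saddlepointh}.

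The step I expect to be the main obstacle is the \emph{uniformity} in $h$ of the inf-sup constant for $\c$ in the case~\eqref{eq:defc1}--\eqref{eq:defc1h}: this is precisely where the assumption~\eqref{eq:H1stab} on the $H^1$-stability of the $L^2$-projection $P_0:\Hub\to\Sh$ is used, since without it the argument of Proposition~\ref{pr:infsuph} would only deliver a mesh-dependent constant. In the case~\eqref{eq:defc2} the relevant operator is instead the $H^1$-projection $P_1$, which is automatically a contraction in the $H^1$-norm, so that no compatibility condition between the meshes $\tria$ and $\triaB$ is required. Everything else is a routine transcription of the arguments already carried out at the continuous level in Proposition~\ref{pr:firstinfsup}.
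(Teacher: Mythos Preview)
Your proposal is correct and follows exactly the paper's approach: the paper's proof of Proposition~\ref{pr:firstinfsuph} consists solely of the sentence ``Putting together the results of the above propositions, we obtain the discrete inf-sup condition for $\AA$, see~\cite{XZ},'' and you have spelled out precisely how Propositions~\ref{pr:elkerh} and~\ref{pr:infsuph} feed into the abstract result of~\cite{XZ}. Your identification $\KBBh=\Voh\times\Sh\times\Lh$ and the checklist of continuity and stability constants are exactly what the citation is meant to encapsulate, and your remarks on the role of~\eqref{eq:H1stab} match the paper's subsequent discussion.
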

}
\begin{remark}
Condition~\eqref{eq:H1stab} has been widely studied in the literature.
It can be easily obtained by using an inverse inequality on quasi-uniform
meshes.
Weaker assumptions than the quasi-uniformity of the mesh have been
investigated in several papers,
see for example~\cite{CrTh1987,BPS2001,CC2002,CC2004,BankYser2014}.
In particular, the
stability of the $L^2$-projection in $H^1$ has been proved in~\cite{CrTh1987}
under the assumption that neighboring element-sizes obey a global
growth-condition and in~\cite{BPS2001} in the case of locally
quasiuniform meshes. These conditions have been weakened in~\cite{CC2002},
while~\cite{CC2004} extends the result to meshes generated by red-green-blue
refinements in adaptive procedures for piecewise linear finite elements.
Recently~\cite{BankYser2014} has improved the previous results to cover the
case of many commonly used adaptive meshing strategies. More general mesh
refinements are considered in~\cite{gaspacio}.
\end{remark}

From the theory of the discretization of saddle point problems,
the above propositions yield the following error estimate theorem
(see~\cite[Th.~5.2.1]{bbf}).
\begin{thm}
\label{th:errthick}
Let $(\u,p,\X,\llambda)$ and $(\uh,\ph,\Xh,\lh)$ be solutions of
Problems~\ref{pb:saddlepoint} and~\ref{pb:saddlepointh}. Under the
assumption~\eqref{eq:H1stab} if $\c$ is given by~\eqref{eq:defc1}, the
following optimal error estimate holds true:
\[
\aligned
\|\u-\uh\|_1&+\|p-\ph\|_0+\|\X-\Xh\|_{1,\B}+\|\llambda-\lh\|_{\LL}\\
\le& C\inf_{\substack{\v\in\Vh\\ q\in\Qh\\ \Y\in\Sh\\ \mmu\in\Sh}}
\left(\|\u-\v\|_1+\|p-q\|_0+\|\X-\Y\|_{1,\B}+\|\llambda-\mmu\|_{\LL}\right).
\endaligned
\] 
\end{thm}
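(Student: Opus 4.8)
The plan is to invoke the standard convergence theory for mixed finite element approximations of saddle point problems (as in~\cite[Ch.~5]{bbf}), for which we have already assembled all the hypotheses in the preceding propositions. Recall that Problem~\ref{pb:saddlepoint} and Problem~\ref{pb:saddlepointh} have been recast in the abstract forms~\eqref{eq:misto} and~\eqref{eq:mistoh}, with a single bilinear form $\AA$ on $\VV\times\VV$ (resp.\ $\VV_h\times\VV_h$) playing the role of the "$(1,1)$-block" and $\BB$ being the divergence form. The abstract theory requires four ingredients: (i) continuity of $\AA$ and $\BB$ on the full spaces, which is immediate from the continuity of $\a_f$, $\a_s$, $\c$, and of the divergence operator, uniformly in $h$; (ii) the inf-sup condition for $\AA$ on the kernel of $\BB$, in both the continuous and discrete settings — these are exactly Proposition~\ref{pr:firstinfsup} and Proposition~\ref{pr:firstinfsuph}, the latter being obtained by combining Proposition~\ref{pr:elkerh} and Proposition~\ref{pr:infsuph}; (iii) the inf-sup condition for $\BB$, continuous and discrete, which are~\eqref{eq:infsupdiv} and~\eqref{eq:infsupdivh}, both following from the assumed Stokes-stability of the pair $\Vh\times\Qh$; (iv) a weak-to-strong consistency, but since $\VV_h\subseteq\VV$ and $\Qh\subseteq\Ldo$ the method is conforming, so there is no consistency error and the Strang-type terms vanish.

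With these four ingredients in hand, the conclusion is purely mechanical: the abstract theorem~\cite[Th.~5.2.1]{bbf} gives, for a conforming discretization, that the discrete problem~\eqref{eq:mistoh} is well-posed (existence, uniqueness, and stability of $(\U_h,\ph)$ with constants depending only on $\kappa_1$, $\overline\beta_{\div}$, and the continuity constants), and that the error is quasi-optimal, namely bounded by a constant times the best approximation error of the exact solution $(\U,p)=(\u,\X,\llambda,p)$ in the product space $\VV_h\times\Qh$. The first step I would carry out is therefore simply to state that all hypotheses of~\cite[Th.~5.2.1]{bbf} are met — citing~\eqref{eq:infsupdivh}, Proposition~\ref{pr:firstinfsuph}, and the (uniform) continuity of the forms — and the second step is to read off the resulting Céa-type bound. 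Since the norm $|||\cdot|||_{\VV}$ on $\VV$ is precisely $(\|\v\|_1^2+\|\Y\|_{1,\B}^2+\|\mmu\|_\LL^2)^{1/2}$, the abstract estimate
\[
|||\U-\U_h|||_{\VV}+\|p-\ph\|_0\le C\inf_{\V_h\in\VV_h,\,q_h\in\Qh}\bigl(|||\U-\V_h|||_{\VV}+\|p-q_h\|_0\bigr)
\]
unpacks immediately into the stated inequality, after noting that $\Lh=\Sh$ so the infimum over $\mmu\in\Sh$ is indeed the best approximation of $\llambda$ in $\Lh$.

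The only point requiring a little care — and the one I would flag as the "main obstacle," though it is more a bookkeeping issue than a genuine difficulty — is to make sure the discrete inf-sup constants $\kappa_1$ and $\overline\beta_{\div}$ are genuinely independent of both meshsizes $h_x$ and $h_s$, so that the constant $C$ in the final estimate is uniform. For $\overline\beta_{\div}$ this is guaranteed by the assumed Stokes stability of $\Vh\times\Qh$; for $\kappa_1$ one must trace through the proofs of Propositions~\ref{pr:elkerh} and~\ref{pr:infsuph} and check that the constants $\alpha_1$ and $\beta_1$ there do not degenerate — which is exactly where the hypothesis~\eqref{eq:H1stab} on the $H^1$-stability of the $L^2$-projection enters when $\c$ is given by~\eqref{eq:defc1} (in the case~\eqref{eq:defc2} the $H^1$-projection $P_1$ is automatically $H^1$-stable and no mesh condition is needed). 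Once this uniformity is confirmed, the quasi-optimal estimate follows directly from the abstract theory, and no further computation is needed; in particular the rates of convergence in terms of $h_x$ and $h_s$ are then obtained by inserting standard interpolation estimates for $\Vh$, $\Qh$, and $\Sh$ into the right-hand side, yielding optimality with respect to the polynomial degrees employed.
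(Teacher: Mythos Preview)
Your proposal is correct and follows essentially the same approach as the paper: the paper does not give a proof either, but simply remarks that the preceding propositions supply all the hypotheses needed to invoke the abstract error estimate~\cite[Th.~5.2.1]{bbf}. Your write-up is in fact more detailed than the paper's, which just states the result as a direct consequence of Propositions~\ref{pr:elkerh}, \ref{pr:infsuph}, and~\ref{pr:firstinfsuph} together with~\eqref{eq:infsupdivh}.
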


\section{The case of a thin solid immersed in a fluid}
\label{se:thin}
In this section we consider the case of thin structures with very small
constant thickness $t_s$, so that we assume that the physical quantities
depend only on variables along the middle section of the structure and are
constant in the normal direction.
In order to maintain the same notation in the final
formulation of the problem, the region occupied by the solid is
$\Os_t\times]-t_s,t_s[$, where $\Os_t$ is a subset of $\Omega$ of codimension
one (a surface in the 3D case or a curve in the 2D one). Therefore we have
that the reference domain $\B$ is a Lipschitz subdomain of $\RE^{d-1}$ and the
deformation gradient $\F:\B\to\RE^{d\times (d-1)}$ is such that
\[
|\F|=\left|\frac{\partial\X}{\partial s}\right| \text{ if }d=2,\qquad
|\F|=\left|
\frac{\partial\X}{\partial s_1}\times\frac{\partial\X}{\partial s_2}
\right|\text{ if }d=3,
\]
$s$, $s_1$ and $s_2$ being the parametric variables in $\B$.

Following the same arguments as in~\cite{BCG2011},
Equations~\eqref{eq:virtualwork2}-\eqref{eq:divfree} can be written
in the following form:
\begin{equation}
\aligned
  &\rho_f\frac d {dt}(\u(t),\v)+b(\u(t),\u(t),\v)+a(\u(t),\v)\qquad&&\\
  &\qquad-(\div\v,p(t))=
  \langle\mathbf{F}_1(t),\v\rangle+\langle\mathbf{F}_2(t),\v\rangle
   &&\quad\forall\v\in\Huo
\\
  &(\div\u(t),q)=0&&\quad\forall q\in\Ldo
\\
  &\langle\mathbf{F}_1(t),\v\rangle=
   -\dr\int_\B\frac{\partial^2\X}{\partial t^2}\v(\X(\s,t))\,\ds
      &&\quad\forall\v\in\Huo
\\
  &\langle\mathbf{F}_2(t),\v\rangle= - \int_\B \P(\F(\s,t)): \nabla_s
\v(\X(\s,t))\,\ds
      &&\quad\forall\v\in\Huo.
\endaligned
\end{equation}
Here $\u$ and $p$ represent velocity and pressure of the fluid, respectively,
$\dr=(\rho_s-\rho_f)t_s$, and $\P=t_s\tilde\P$ where $\tilde\P$ is obtained
from~\eqref{eq:Piola} with the necessary modifications to cover the present
situation.
Moreover, the motion of the thin structure is governed by the following condition 
\[
\u(\x,t)=\frac{\partial\X(\s,t)}{\partial t}\Big|_{\x=\X(\s,t)}.
\]
Then the problem has the same form as Problem~\ref{pb:pbvar} 
by rewriting the above body motion constraint variationally as
\[
\c\left(\mmu,\ucX-\frac{\partial\X}{\partial t}(t)\right)=0
\]
for all $\mmu$ in a suitably defined functional space $\LL$.
Assuming that $\X(t)\in W^{1,\infty}(\B)$ is one to one, $\ucX$
represents the trace of $\u$ along $\Os_t$.
Therefore $\ucX\in\HhalfB$; we set $\LL=(\HhalfB)'$ and
$\c:\LL\times\HhalfB\to\RE$ given by
\begin{equation}
\label{eq:defcthin}
\c(\mmu,\z)=\langle\mmu,\z\rangle\quad\forall\mmu\in\LL,\z\in\HhalfB,
\end{equation}
where $\langle\cdot,\cdot\rangle$ is the duality pairing between $\HhalfB$
and $\LL=(\HhalfB)'$.

With this definition, we can perform the same stability analysis as in
Sections~\ref{se:pb} and~\ref{se:dt}
and show that Propositions~\ref{pr:stab-cont}
and~\ref{pr:energy_sd} hold true also in this case. 
In the following we analyze the well-posedness of Problem~\ref{pb:saddlepoint}
and its finite element discretization. The discussion will be carried on using
the same arguments as in Sections~\ref{se:continuous} and~\ref{se:FE} relying
on the formulation~\eqref{eq:girato} written in~\eqref{eq:misto} as a saddle
point problem. 
\corrigendum{
As before the inf-sup condition for the bilinear form $\BB$ is the standard
inf-sup condition for the divergence operator, therefore~\eqref{eq:infsupdiv}
and~\eqref{eq:infsupdivh} hold also in the present case. 
It remains to show the analogous of Propositions~\ref{le:infsup}
and~\ref{pr:elker} and their discrete counterparts in the case of
thin structures. As in the previous sections, these two propositions imply the 
required inf-sup condition for $\AA$. 
}%
The following inf-sup conditions ensure existence and uniqueness of 
the solution~\eqref{eq:misto} in the case of thin structures.
\corrigendum{
\begin{proposition}
\label{pr:elker2}
Let $\c$ and $\K$ be given by~\eqref{eq:defcthin} and~\eqref{eq:kernel},
respectively. Then there exists $\alpha_0>0$ such that
\begin{equation}
\label{eq:elker2}
\a_f(\u,\u)+\a_s(\X,\X)\ge\alpha_0(\|\u\|^2_1+\|\X\|^2_{1,\B})\quad
\forall(\u,\X)\in\K.
\end{equation}
\end{proposition}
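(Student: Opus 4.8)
\emph{Plan of proof.} The argument will follow the proof of Proposition~\ref{pr:elker} almost verbatim: the only place where the codimension of $\Os$ plays a role is the degenerate case $\beta=0$, and there the estimate of $\|\X\|_{0,\B}$ will rely on a trace inequality onto the codimension-one manifold $\Os$ instead of a bi-Lipschitz change of variables on a codimension-zero domain. First I would recall, exactly as in Proposition~\ref{pr:elker}, that $\a_f$ is coercive on $\Huo$, so that $\a_f(\u,\u)\ge c_1\|\u\|^2_1$ for some $c_1>0$ depending only on $\nu_0$, $\alpha$ and $\Omega$ (this step uses only the coercivity of $a$ and the skew-symmetry of $b$, hence is insensitive to the thin/thick distinction), and that
\[
\a_s(\X,\X)=\beta\|\X\|^2_{0,\B}+\gamma\|\Grad_s\X\|^2_{0,\B}\ge\min(\beta,\gamma)\,\|\X\|^2_{1,\B}.
\]
If $\beta>0$, summing the two bounds already yields~\eqref{eq:elker2} with $\alpha_0=\min(c_1,\beta,\gamma)$, and membership in $\K$ is not even needed.

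The case $\beta=0$ is where $(\u,\X)\in\K$ enters. In the present setting $\c$ is the full duality pairing between $\HhalfB$ and $\LL=(\HhalfB)'$, see~\eqref{eq:defcthin}, so the relation $\c(\mmu,\ucXb-\X)=0$ for all $\mmu\in\LL$ forces $\X=\ucXb$ in $\HhalfB$, in particular in $L^2(\B)^d$; here $\ucXb$ is the trace of $\u$ along the surface $\Os$ read back on the parameter domain $\B$ via $\Xb$. I would then estimate
\[
\|\X\|_{0,\B}=\|\ucXb\|_{0,\B}\le C_1\|\u\|_{0,\Os}\le C_2\|\u\|_{1,\Omega}\le C_\Omega C_2\,|\u|_1,
\]
where $C_1$ comes from the change of variables $\Xb\colon\B\to\Os$, whose $(d-1)$-dimensional Jacobian $|\F|$ is bounded above and below by positive constants since $\Xb\in W^{1,\infty}(\B)^d$ is invertible with Lipschitz inverse; $C_2$ is the norm of the trace operator $H^1(\Omega)^d\to L^2(\Os)^d$, bounded because $\Os$ is a Lipschitz hypersurface compactly contained in $\Omega$; and the last inequality is Poincar\'e's inequality for $\u\in\Huo$, $C_\Omega$ being the Poincar\'e constant. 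With this bound the conclusion follows as in Proposition~\ref{pr:elker}:
\begin{align*}
\a_f(\u,\u)+\a_s(\X,\X)&\ge c_1\|\u\|^2_1+\gamma\|\Grad_s\X\|^2_{0,\B}\\
&\ge\frac{c_1}2\|\u\|^2_1+\frac{c_1}{2C_\Omega^2C_2^2}\|\X\|^2_{0,\B}+\gamma\|\Grad_s\X\|^2_{0,\B},
\end{align*}
so that~\eqref{eq:elker2} holds with $\alpha_0=\min\!\left(\frac{c_1}2,\frac{c_1}{2C_\Omega^2C_2^2},\gamma\right)$.

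The only genuinely new ingredient, and the one to be stated with care, is the trace estimate $\|\ucXb\|_{0,\B}\le C\|\u\|_{1,\Omega}$: one must invoke the trace theorem for the \emph{internal} Lipschitz manifold $\Os$ (rather than for a portion of $\partial\Omega$) and then observe that composing with the bi-Lipschitz parametrization $\Xb$ preserves square integrability, which is precisely where the hypothesis that $\Xb\in W^{1,\infty}(\B)^d$ be invertible with Lipschitz inverse is used. Everything else, including the handling of the possibly vanishing coefficient $\beta$, is identical to the codimension-zero case already treated in Proposition~\ref{pr:elker}.
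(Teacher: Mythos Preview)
Your proof is correct and follows essentially the same approach as the paper: the paper's argument simply says that the proof of Proposition~\ref{pr:elker} carries over verbatim except in the case $\beta=0$, where the bound $\|\X\|_{0,\B}\le C\|\u\|_1$ now follows from the trace theorem in $\Huo$ applied to the codimension-one manifold $\Gb=\Xb(\B)$, which is precisely what you spell out in detail.
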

}
\begin{proof}
The proof is the same except for the case $\beta=0$. The fact that
\corrigendum{$(\u,\X)\in\K$} implies again that $\X=\ucXb$, that is
$\X$ is the trace of $\u$ on $\Gb=\Xb(\B)$ and the bound
$\|\X\|_{0,\B}\le C\|\u\|_1$ is a consequence of the
trace theorem in $\Huo$.
\end{proof}
\corrig{
The following auxiliary lemma will be used to show the inf-sup condition
for $\c$.
\begin{lemma}
\label{le:tracce}
For all $\z\in\HhalfB$ there exists $\w\in\Vo$ such that $\w(\Xb)=\z$
with $\|w\|_1\le c\|\z\|_{\HhalfB}$.
\end{lemma}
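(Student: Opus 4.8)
The plan is to obtain $\w$ in three steps: transport $\z$ from $\B$ to the immersed surface $\Gb=\Xb(\B)$, lift the transported datum to a field in $\Huo$ with the prescribed trace on $\Gb$, and finally correct this field so that it becomes divergence free in all of $\Omega$ without changing either its trace on $\Gb$ or its vanishing values on $\partial\Omega$. For the first step, since $\Xb\in W^{1,\infty}(\B)^d$ is invertible with Lipschitz inverse, it maps $\B$ bi-Lipschitz onto the bounded Lipschitz hypersurface $\Gb$, whose rim is $\Xb(\partial\B)$; by the invariance of the fractional Sobolev norm of order $1/2$ under bi-Lipschitz maps, $\z\mapsto\tilde\z:=\z\circ\Xb^{-1}$ is a bounded isomorphism of $\HhalfB$ onto $H^{1/2}(\Gb)^d$, so that $\|\tilde\z\|_{H^{1/2}(\Gb)}\le c\,\|\z\|_{\HhalfB}$.

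For the lifting, recall that the immersed structure does not touch $\partial\Omega$; I would fix a ball $\Omega_1$ with $\Gb\subset\subset\Omega_1\subset\subset\Omega$, so that in particular the rim $\Xb(\partial\B)$ lies in the interior of $\Omega_1$ and $\Omega_1\setminus\Gb$ is connected. By the standard trace and extension theory for Lipschitz hypersurfaces — flatten $\Gb$ in finitely many local charts, lift $\tilde\z$ into each of the two sides by the usual half-space extension $H^{1/2}(\RE^{d-1})\to H^1$, glue the two one-sided lifts across $\Gb$ (legitimate because their traces on $\Gb$ both coincide with $\tilde\z$), recombine through a partition of unity and multiply by a cut-off supported in a tubular neighbourhood of $\Gb$ contained in $\Omega_1$ — one gets $\hat\w\in H^1_0(\Omega_1)^d$ with $\hat\w|_{\Gb}=\tilde\z$ as a (two-sided) trace and $\|\hat\w\|_{1,\Omega_1}\le c\,\|\tilde\z\|_{H^{1/2}(\Gb)}$. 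Extending $\hat\w$ by zero, I regard it as an element of $\Huo$.

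For the divergence correction, set $g:=\div\hat\w\in L^2(\Omega_1)$. Since $\hat\w\in H^1_0(\Omega_1)^d$ and $\Gb$ is Lebesgue-null, $\int_{\Omega_1\setminus\Gb}g=\int_{\Omega_1}g=0$, which is the compatibility condition for the divergence problem on the connected bounded domain $\Omega_1\setminus\Gb$; by a Bogovskii-type solvability result valid for such a (cracked) domain there is $\z_0\in H^1_0(\Omega_1\setminus\Gb)^d$ with $\div\z_0=g$ and $\|\z_0\|_{1,\Omega_1}\le c\,\|g\|_{0,\Omega_1}\le c\,\|\hat\w\|_{1,\Omega_1}$. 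Because $\z_0$ has vanishing trace on both faces of $\Gb$, it carries no singular part across $\Gb$, hence $\z_0\in H^1_0(\Omega_1)^d$ and $\div\z_0=g$ in $\Omega_1$. Then $\w:=\hat\w-\z_0$, extended by zero outside $\Omega_1$, belongs to $\Huo$, is divergence free in $\Omega$, satisfies $\w|_{\Gb}=\tilde\z$ (since $\z_0|_{\Gb}=0$) and therefore $\w(\Xb)=\z$, and $\|\w\|_1\le\|\hat\w\|_{1,\Omega_1}+\|\z_0\|_{1,\Omega_1}\le c\,\|\z\|_{\HhalfB}$; thus $\w\in\Vo$ with the claimed bound.

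The delicate point is the last step: the lifting must be made solenoidal in all of $\Omega$ — not merely away from $\Gb$ — while keeping both the prescribed trace and the homogeneous boundary values, which forces the correction to be performed on the cracked domain $\Omega_1\setminus\Gb$ (not Lipschitz at the rim $\Xb(\partial\B)$), so that a Bogovskii operator valid for such domains is required; one then has to verify that the corrector, precisely because both of its one-sided traces on $\Gb$ vanish, reassembles into an $H^1_0$ field of the uncut domain and does not perturb the trace on $\Gb$ when subtracted. In two dimensions one can dispense with the Bogovskii step altogether by representing $\w$ via a stream function $\psi$ and reducing the statement to a stable $H^2$-lifting of the Cauchy data of $\psi$ on $\Gb$ determined by $\z$; the argument sketched above, however, has the advantage of being independent of the space dimension.
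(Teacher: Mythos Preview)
Your argument is sound for the open-surface case ($\partial\B\ne\emptyset$), which is in fact the ambient setting here since $\B$ is introduced as a Lipschitz subdomain of $\RE^{d-1}$. The paper takes a different route: it first specialises to a \emph{closed} orientable $\B$, so that $\Gb$ splits $\Omega$ into two subdomains $\Omega_1,\Omega_2$; it lifts $\z$ to some $\w_0\in\Huo$ by trace surjectivity, solves $\div\w_i=\div\w_0$ with $\w_i\in H^1_0(\Omega_i)^d$ on each side via the standard inf-sup for the divergence, and patches $\w=\w_0-\w_i$ in $\Omega_i$; the open-surface case is then waved off with the remark that ``suitable cuts of $\Omega$'' reduce it to the closed one. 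Thus the paper uses only the textbook Bogovskii on ordinary subdomains but does not actually carry out the reduction it invokes, whereas you attack the open case head-on at the price of a Bogovskii operator on the cracked set $\Omega_1\setminus\Gb$, non-Lipschitz at the rim --- a price you rightly flag (and which John-domain versions of Bogovskii do cover, since a ball slit along a Lipschitz hypersurface with boundary is a John domain). A further point in your favour: the paper's per-piece correction tacitly needs the compatibility $\int_{\Omega_i}\div\w_0=\int_{\Gb}\tilde\z\cdot\n_i=0$, which is neither stated nor automatic for generic $\z$; your connected cracked-domain correction needs only the global condition $\int_{\Omega_1}\div\hat\w=0$, guaranteed by $\hat\w\in H^1_0(\Omega_1)^d$.
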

\begin{proof}
For simplicity, we sketch the proof in the case of a closed orientable
manifold $\B$. The general case can be reduced to this one by introducing
suitable cuts of $\Omega$.
Thanks to the surjectivity of the trace operator from $\Huo$ to
$(H^{1/2}(\overline\Gamma))^d$, there exists $\w_0\in\Huo$ such that
$\w_0(\Xb)=\z$ with $\|\w_0\|_1\le c\|\z\|_{\HhalfB}$. Since $\B$ is a closed
manifold of codimension 1, $\Omega$ is divided into two subsets $\Omega_1$ and
$\Omega_2$ by $\B$.
Due to the inf-sup condition~\eqref{eq:infsupdiv}, there exist 
$\w_i\in H^1_0(\Omega_i)^d$ for $i=1,2$ such that
$\div\w_i=\div\w_0$ in $\Omega_i$ with
$\|\w_i\|_{H^1(\Omega_i)}\le c_i\|\w_0\|_1$. Setting
\[
\w=\left\{
\begin{array}{ll}
\w_0-\w_1 &\quad\text{in }\Omega_1\\
\w_0-\w_2 &\quad\text{in }\Omega_2,
\end{array}
\right.
\]
we have that $\w\in\Vo$ and $\|\w\|_1\le c\|\z\|_{\HhalfB}$ as required.
\end{proof}
}
\begin{proposition}
\label{pr:infsup2}
Let $\c$ be given by~\eqref{eq:defcthin}, then
there exists a constant $\beta_0>0$ such that for all $\mmu\in\LL$ it holds
true
\corrigendum{
\begin{equation}
\label{eq:infsup2}
\sup_{(\v,\Y)\in\corrig{\Vo}\times\Hub}\frac {\c(\mmu,\vcXb-\Y)}
{(\|\v\|^2_1+\|\Y\|^2_{1,\B})^{1/2}}\ge \beta_0\|\mmu\|_{\LL}.
\end{equation}
}
\end{proposition}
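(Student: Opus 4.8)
The plan is to exploit the duality structure of $\LL = (\HhalfB)'$ together with Lemma~\ref{le:tracce}. First I would fix $\mmu \in \LL$ and, by definition of the dual norm, choose $\z \in \HhalfB$ with $\|\z\|_{\HhalfB} = 1$ (or nearly so) such that $\langle\mmu,\z\rangle$ is close to $\|\mmu\|_{\LL}$. The idea is then to \emph{realize} this test function $\z$ as the trace $\vcXb$ of a divergence-free velocity field $\v \in \Vo$, rather than as an arbitrary $\Y \in \Hub$, so that the term $\c(\mmu,\vcXb - \Y)$ can be made to equal essentially $\langle\mmu,\z\rangle$.

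The key steps, in order, are as follows. Given the chosen $\z$, apply Lemma~\ref{le:tracce} to obtain $\w \in \Vo$ with $\w(\Xb) = \z$ and $\|\w\|_1 \le c\|\z\|_{\HhalfB}$. Then take $\v = \w$ and $\Y = 0$ in the supremum on the left-hand side of~\eqref{eq:infsup2}; note $\Y = 0 \in \Hub$ is admissible. With this choice the numerator becomes $\c(\mmu, \w(\Xb) - 0) = \langle\mmu,\z\rangle$, which by construction is bounded below by (a fixed fraction of) $\|\mmu\|_{\LL}$, while the denominator is $(\|\w\|_1^2 + 0)^{1/2} = \|\w\|_1 \le c\|\z\|_{\HhalfB} = c$ (taking $\|\z\|_{\HhalfB}=1$). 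Dividing, the quotient is bounded below by $\tfrac{1}{c}\|\mmu\|_{\LL}$ up to the approximation parameter, and passing to the supremum and letting the approximation parameter vanish yields~\eqref{eq:infsup2} with $\beta_0 = 1/c$, where $c$ is the constant from Lemma~\ref{le:tracce}.

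The main obstacle is not in this short argument but is already isolated in Lemma~\ref{le:tracce}, whose proof handles the genuinely delicate point: one cannot simply invoke surjectivity of the trace operator from $\Huo$ onto $(H^{1/2}(\Gb))^d$, because the resulting lift need not be divergence-free. The correction step in that lemma — subtracting, on each side $\Omega_i$ of the manifold $\B$, a function in $H^1_0(\Omega_i)^d$ with the same divergence as the naive lift, using the inf-sup condition~\eqref{eq:infsupdiv} for the divergence on each subdomain — is what makes $\w$ lie in $\Vo$ while keeping its trace equal to $\z$ (since the corrections vanish on $\B$) and its $H^1$-norm controlled. Thus, once Lemma~\ref{le:tracce} is granted, the proof of Proposition~\ref{pr:infsup2} is the one-line substitution described above, entirely parallel to the continuous argument of Proposition~\ref{le:infsup} but with the trace lift replacing the trivial identification of $\z$ as an element of $\Hub$.

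\begin{proof}
Fix $\mmu\in\LL=(\HhalfB)'$ and let $\varepsilon>0$. By definition of the dual
norm there exists $\z\in\HhalfB$ with $\|\z\|_{\HhalfB}=1$ such that
$\langle\mmu,\z\rangle\ge\|\mmu\|_{\LL}-\varepsilon$. By
Lemma~\ref{le:tracce} there exists $\w\in\Vo$ with $\w(\Xb)=\z$ and
$\|\w\|_1\le c\|\z\|_{\HhalfB}=c$, where $c$ is the constant from that lemma.
Choosing $\v=\w$ and $\Y=\mathbf{0}\in\Hub$ in the supremum on the left-hand
side of~\eqref{eq:infsup2}, and using~\eqref{eq:defcthin}, we get
\[
\sup_{(\v,\Y)\in\Vo\times\Hub}\frac{\c(\mmu,\vcXb-\Y)}
{(\|\v\|^2_1+\|\Y\|^2_{1,\B})^{1/2}}
\ge\frac{\langle\mmu,\z\rangle}{\|\w\|_1}
\ge\frac{\|\mmu\|_{\LL}-\varepsilon}{c}.
\]
Since $\varepsilon>0$ is arbitrary, the estimate~\eqref{eq:infsup2} follows
with $\beta_0=1/c$.
\end{proof}
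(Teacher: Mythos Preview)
Your proof is correct and follows essentially the same route as the paper's: both arguments invoke the dual-norm definition of $\|\mmu\|_{\LL}$, lift a near-maximizer $\z\in\HhalfB$ to a divergence-free field via Lemma~\ref{le:tracce}, and test with $\Y=0$. The only cosmetic differences are that the paper phrases the near-maximizer step as a maximizing sequence (and settles for $\beta_0=1/(2c)$), whereas your $\varepsilon$-argument cleanly yields $\beta_0=1/c$.
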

\begin{proof}
Since $\LL=(\HhalfB)'$, we have the following definition of the norm in $\LL$:
\[
\|\mmu\|_{\LL}=\sup_{\z\in\HhalfB} \frac{\langle\mmu,\z\rangle}{\|\z\|_{\HhalfB}}
=\sup_{\z\in\HhalfB} \frac{\c(\mmu,\z)}{\|\z\|_{\HhalfB}}.
\]
Let us consider a maximizing sequence $\{\z_n\}_{n\in\NA}$ such that 
\[
\lim_{n\to\infty}
\frac{\c(\mmu,\z_n)}{\|\z_n\|_{\HhalfB}}=\|\mmu\|_{\LL}.
\]
\corrig{
Thanks to Lemma~\ref{le:tracce}, there exists $\u_n\in\Vo$ such that
$\u_n(\Xb)=\z_n$ with $\|\u_n\|_1\le c\|z_n\|_{\HhalfB}$.}
Hence we obtain the desired inequality~\eqref{eq:infsup2} as follows
\corrigendum{
\[
\aligned
\sup_{(\v,\Y)\in\corrig{\Vo}\times\Hub}&\frac{\c(\mmu,\vcXb-\Y)}{\|\V\|_{\VV}}
\ge \sup_{\v\in\corrig{\Vo}}\frac{\c(\mmu,\vcXb)}{\|\v\|_1}\\
&\ge \frac{\c(\mmu,\u_n(\Xb))}{\|\u_n\|_1}
\ge\frac1c \frac{\c(\mmu,\z_n)}{\|\z_n\|_{\HhalfB}}
\ge\frac1{2c}\|\mmu\|_{\LL}.
\endaligned
\]
}
\end{proof}
Let us now introduce a finite element discretization of Problem~\ref{pb:pbvar}
with $\c$ given by~\eqref{eq:defcthin}.
With the same notation as in Section~\ref{se:FE}, we set $\LL_h=\Sh\subset\LL$.
Then we have again that the duality pairing of regular elements in $\LL$ can be
computed as the scalar product in $L^2(\B)$.
Let us show the discrete inf-sup conditions which ensure existence and
uniqueness of the discrete solution together with optimal error estimate.
\corrigendum{
\begin{proposition}
\label{pr:elkerh2}
There exists $\alpha_1>0$ independent of $h_x$ and $h_s$ such that
\begin{equation}
\label{eq:elkerh2}
\a_f(\uh,\uh)+\a_s(\Xh,\Xh)\ge\alpha_0(\|\uh\|^2_1+\|\Xh\|^2_{1,\B})\quad
\forall(\uh,\Xh)\in\K_h.
\ge\alpha_1,
\end{equation}
where the discrete kernel $\K_h$ is given by
\[
\K_h=\left
\{(\v_h,\Y_h)\in\corrig{\Voh}\times\Sh:\ \c(\mmu_h,\vhcXb-\Y_h)=0\
\forall\mmu_h\in\Lh \right\}.
\]
\end{proposition}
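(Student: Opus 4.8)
The plan is to follow the proof of Proposition~\ref{pr:elkerh} almost verbatim, changing only the argument that controls $\|\Xh\|_{0,\B}$ in the degenerate case $\beta=0$. Since $\a_f$ is coercive on $\Huo$ with a mesh-independent constant $c_1>0$ (recall that the convective term is dropped and that $\alpha>0$), we have $\a_f(\uh,\uh)\ge c_1\|\uh\|^2_1$, while
\[
\a_s(\Xh,\Xh)=\beta\|\Xh\|^2_{0,\B}+\gamma\|\Grad_s\Xh\|^2_{0,\B}.
\]
If $\beta>0$ the estimate~\eqref{eq:elkerh2} is immediate with $\alpha_1=\min(c_1,\beta,\gamma)$, so the only point that really uses the thin-structure setting is the case $\beta=0$.

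In that case I would exploit the membership of $(\uh,\Xh)$ in $\K_h$ together with the choice $\Lh=\Sh$. Since $\c$ is given by~\eqref{eq:defcthin} and its restriction to $\Sh\times\Sh$ (more precisely, to $\Sh\times L^2(\B)^d$) coincides with the $L^2(\B)$ scalar product, the kernel condition reads $(\uhcXb-\Xh,\mmu_h)_\B=0$ for all $\mmu_h\in\Sh$; as the trace $\uhcXb$ need not lie in $\Sh$, this says precisely that $\Xh=P_0(\uhcXb)$, the $L^2(\B)$-projection of $\uhcXb$ onto $\Sh$ --- exactly as in the first case of the proof of Proposition~\ref{pr:elkerh}. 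Being an orthogonal projection in $L^2(\B)$, $P_0$ is a contraction there, whence $\|\Xh\|_{0,\B}\le\|\uhcXb\|_{0,\B}$; and since $\uhcXb$ is the trace of $\uh\in\Huo$ on $\Gb=\Xb(\B)$ (well defined because $\Xb$ is bi-Lipschitz), the trace theorem in $\Huo$ yields $\|\uhcXb\|_{0,\B}\le C_{\mathrm{tr}}\|\uh\|_1$ with $C_{\mathrm{tr}}$ independent of $h_x$ and $h_s$. This is the same trace bound already used in the proof of Proposition~\ref{pr:elker2}; here it plays the role that the change-of-variables estimate $\|\uhcXb\|_{0,\B}\le\|\uh\|_{0,\Omega}$ played in the codimension-zero case.

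Putting the pieces together, for $\beta=0$ one gets
\[
\a_f(\uh,\uh)+\a_s(\Xh,\Xh)\ge c_1\|\uh\|^2_1+\gamma\|\Grad_s\Xh\|^2_{0,\B}
\ge\tfrac{c_1}2\|\uh\|^2_1+\tfrac{c_1}{2C_{\mathrm{tr}}^2}\|\Xh\|^2_{0,\B}+\gamma\|\Grad_s\Xh\|^2_{0,\B},
\]
which gives~\eqref{eq:elkerh2} with $\alpha_1=\min(\tfrac{c_1}2,\tfrac{c_1}{2C_{\mathrm{tr}}^2},\gamma)$, a constant that depends only on the data and not on the meshes. I do not anticipate a genuine obstacle: in contrast with the codimension-zero situation, no $H^1$-stability assumption on $P_0$ is needed, since only the full $L^2(\B)$-norm of $\Xh$ --- and not an $H^1$-seminorm --- has to be absorbed. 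The only things to be careful about are reading the discrete kernel condition through the $L^2(\B)$-representation of the pairing on $\Sh$, and observing that the trace operator composed with the bi-Lipschitz deformation $\Xb$ is a bounded map $\Huo\to L^2(\B)^d$ with norm independent of $h_x$ and $h_s$.
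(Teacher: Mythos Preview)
Your proposal is correct and follows essentially the same approach as the paper: both reduce to the case $\beta=0$, read the kernel condition as $\Xh=P_0(\uhcXb)$ via the $L^2$-representation of the pairing on $\Sh$, and then bound $\|\Xh\|_{0,\B}$ by $C\|\uh\|_1$ using the trace theorem. The only difference is cosmetic: the paper writes $\|P_0(\uhcXb)\|_{0,\B}\le\|\uhcXb\|_{0,\B}+\|P_0(\uhcXb)-\uhcXb\|_{0,\B}$ and invokes an $H^{1/2}$-approximation estimate for the second term, whereas you use directly that $P_0$ is an $L^2$-contraction --- your route is slightly cleaner and avoids the superfluous detour.
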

}
\begin{proof}
\corrigendum{
The definitions of $\a_f$ and $\a_s$ yield
\[
\a_f(\uh,\uh)+\a_s(\Xh,\Xh)\ge c_1\|\uh\|^2_1+\beta\|\Xh\|^2_{0,\B}+
\gamma\|\Grad_s\Xh\|^2_{0,\B}
\]
}
and in the case $\beta=0$ we need to bound $\|\Xh\|^2_{0,\B}$.
Since \corrigendum{$(\uh,\Xh)\in\K_h$}, we have by definition that
$\c(\mmu,\uhcXb-\Xh)=(\mmu,\uhcXb-\Xh)_\B=0$ for all $\mmu\in\Sh$. 
Therefore $\Xh=P_0(\uhcXb)$ where $P_0$ is the $L^2$-projection onto $\Sh$, so
that
\[
\aligned
\|\Xh\|_{0,\B}&=\|P_0(\uhcXb)\|_{0,\B}
\le\|\uhcXb\|_{0,\B}+\|P_0(\uhcXb)-\uhcXb\|_{0,\B}\\
&\le\|\uhcXb\|_{0,\B}+Ch_s^{1/2}\|\uhcXb\|_{1/2,\B}
\le\|\uhcXb\|_{0,\B}+Ch_s^{1/2}\|\uh\|_{1/2,\Gb}\\
&\le C\|\uh\|_1.
\endaligned
\]
\corrigendum{This concludes the proof in the case $\beta=0$ as well.}
\end{proof}
\corrig{
The following approximation property of elements of $\Vo$ with functions
in $\Voh$ will be useful for the proof of the discrete inf-sup condition for
$\c$.
\begin{lemma}
\label{le:approxKKB}
Let us assume that the domain $\Omega$ is convex. Then for all $\bar\u\in\Vo$
there exists $\bar\u_h\in\Voh$ such that
\[
\aligned
&\|\bar\u-\bar\u_h\|_0\le Ch_x\|\bar\u\|_1\\
&\|\bar\u_h\|_1\le C\|\bar\u\|_1.
\endaligned
\]
\end{lemma}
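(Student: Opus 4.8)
The plan is to construct the discrete velocity $\bar\u_h$ as a suitably corrected Stokes-type interpolation (or projection) of $\bar\u$, so that it inherits both the optimal $L^2$ approximation estimate and the $H^1$ stability while satisfying the discrete divergence-free constraint. First I would apply a standard quasi-interpolation or Cl\'ement/Scott--Zhang operator $I_h:\Huo\to\Vh$ to $\bar\u$, which gives $\|\bar\u-I_h\bar\u\|_0\le Ch_x\|\bar\u\|_1$ and $\|I_h\bar\u\|_1\le C\|\bar\u\|_1$, but the resulting $I_h\bar\u$ is generally not discretely divergence-free. To fix this, I would use the Fortin operator associated with the stable Stokes pair $\Vh\times\Qh$: since the discrete inf-sup condition~\eqref{eq:infsupdivh} holds, there exists a Fortin operator $\Pi_h:\Huo\to\Vh$ with $(\div(\v-\Pi_h\v),q_h)=0$ for all $q_h\in\Qh$ and $\|\Pi_h\v\|_1\le C\|\v\|_1$. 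Setting $\bar\u_h=\Pi_h\bar\u$ immediately yields $\bar\u_h\in\Voh$ (because $\div\bar\u=0$ implies $(\div\bar\u_h,q_h)=(\div\bar\u,q_h)=0$) and the $H^1$ stability bound $\|\bar\u_h\|_1\le C\|\bar\u\|_1$.

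The remaining point is the $L^2$ estimate $\|\bar\u-\bar\u_h\|_0\le Ch_x\|\bar\u\|_1$. The Fortin operator alone is typically only $H^1$-stable, not $L^2$-accurate of optimal order, so here is where the convexity assumption on $\Omega$ enters, via a duality (Aubin--Nitsche) argument. I would write $\|\bar\u-\Pi_h\bar\u\|_0=\sup_{\g\in L^2(\Omega)^d}\frac{(\bar\u-\Pi_h\bar\u,\g)}{\|\g\|_0}$ and, for each $\g$, introduce the auxiliary Stokes problem with datum $\g$ whose solution $(\w,r)$ satisfies the elliptic regularity bound $\|\w\|_2+\|r\|_1\le C\|\g\|_0$ precisely because $\Omega$ is convex. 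Testing the Stokes equations with $\bar\u-\Pi_h\bar\u$, using that $\div(\bar\u-\Pi_h\bar\u)$ is $L^2$-orthogonal to $\Qh$ and that $\div\bar\u=0$, and inserting the discrete approximation $\w_h=\Pi_h\w$ (or its interpolant), the consistency and approximation estimates combine to give the factor $h_x$ together with $\|\bar\u-\Pi_h\bar\u\|_1\le C\|\bar\u\|_1$; absorbing constants yields the claimed bound.

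The main obstacle I anticipate is not any single estimate but the bookkeeping in the duality argument: one must handle the pressure term carefully (it does not vanish against $\bar\u-\Pi_h\bar\u$ unless one uses the discrete-divergence property) and ensure the Fortin operator's $H^1$ stability is genuinely mesh-independent. An alternative, cleaner route would be to take $\bar\u_h$ as the \emph{discrete Stokes projection} of $\bar\u$, i.e.\ the velocity component of the solution to the discrete Stokes problem with $\bar\u$'s data; then the $H^1$ bound is automatic from stability, the discrete-divergence property is built in, and the $L^2$ estimate follows from the standard Aubin--Nitsche analysis of the Stokes projection on convex domains, which is already in the literature (e.g.\ in~\cite{bbf}). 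I would most likely present this second version for brevity, remarking that convexity is used only to guarantee $H^2\times H^1$ regularity of the dual Stokes solution.
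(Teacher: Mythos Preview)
Your second, preferred route---taking $\bar\u_h$ to be the velocity component of the discrete Stokes projection of $\bar\u$, obtaining $H^1$ stability from well-posedness and the $L^2$ estimate from Aubin--Nitsche using $H^2\times H^1$ regularity of the dual Stokes problem on the convex domain---is exactly what the paper does. The paper writes $(\bar\u,0)$ as the solution of a Stokes problem with datum $\v\mapsto(\nabla\bar\u,\nabla\v)$, defines $(\bar\u_h,\bar p_h)$ as its discrete counterpart, and carries out the duality argument verbatim.

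Your first route, however, has a real gap that you should be aware of. Setting $\bar\u_h=\Pi_h\bar\u$ with a Fortin operator gives the discrete divergence-free property and $H^1$ stability, and the pressure term in the duality argument can indeed be handled as you describe. But the main term $(\nabla\w,\nabla(\bar\u-\Pi_h\bar\u))$ cannot be reduced to $(\nabla(\w-\w^I),\nabla(\bar\u-\Pi_h\bar\u))$: that replacement requires Galerkin orthogonality, which the Fortin operator does not provide. Without it you are left with a bound of order $\|\w\|_1\|\bar\u-\Pi_h\bar\u\|_1$, which carries no factor of $h_x$. The Stokes projection works precisely because the associated error equation $(\nabla(\bar\u-\bar\u_h),\nabla\v_h)-(\div\v_h,\bar p-\bar p_h)=0$ for all $\v_h\in\Vh$ supplies that orthogonality; this is why the paper (and the standard literature) uses the projection rather than a bare Fortin operator for this kind of estimate.
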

\begin{proof}
Given $\bar\u\in\Vo$, the pair $(\bar\u,\bar p)\in\Huo\times\Ldo$ 
is the solution of the following Stokes problem
\[
\aligned
&(\nabla\bar\u,\nabla\v)-(\div\v,\bar p)=(\nabla\bar\u,\nabla\v)&&
\forall\v\in\Huo\\
&(\div\u,q)=0&&\forall q\in\Ldo,
\endaligned
\]
with $\bar p=0$. Let $(\bar\u_h,\bar p_h)\in\Vh\times\Qh$ be the solution
of the associated discrete problem
\[
\aligned
&(\nabla\bar\u_h,\nabla\v_h)-(\div\v_h,\bar p_h)=(\nabla\bar\u,\nabla\v_h)&&
\forall\v_h\in\Vh\\
&(\div\u_h,q_h)=0&&\forall q_h\in\Qh.
\endaligned
\]
We have $\|\bar\u_h\|_1+\|\bar p_h\|_0\le C\|\bar\u\|_1$.
To estimate the $L^2$-norm of the difference $\bar\u-\bar\u_h$, we introduce
the dual problem
\begin{equation}
\label{eq:dual}
\aligned
&(\nabla\w,\nabla\v)-(\div\v,r)=(\bar\u-\bar\u_h,\v)&&\forall\v\in\Huo\\
&(\div\w,q)=0&&\forall q\in\Ldo.
\endaligned
\end{equation}
Since $\Omega$ is convex, the following a priori estimate for the
solution of~\eqref{eq:dual} holds true
\[
\|\w\|_2+\|r\|_1\le C\|\bar\u-\bar\u_h\|_0.
\]
Let $\w^I\in\Vh$ and $r^I\in\Qh$ interpolate $\w$ and $r$ respectively, with
\[
\|\w-\w_I\|_1+\|r-r^I\|_0\le Ch(\|\w\|_2+\|r\|_1),
\]
then standard computations yield
\[
\aligned
\|\bar\u-\bar\u_h\|_0^2&=(\nabla\w,\nabla(\bar\u-\bar\u_h))-
(\div(\bar\u-\bar\u_h),r)\\
&=(\nabla(\w-\w^I),\nabla(\bar\u-\bar\u_h))+(\div(\w-\w^I),\bar p-\bar p_h)
-(\div(\bar\u-\bar\u_h),r-r^I)\\
&\le\|\w-\w^I\|_1\left(\|\bar\u-\bar\u_h\|_1+\|\bar p-\bar p_h\|_0\right)+
\|\bar\u-\bar\u_h\|_1\|r-r^I\|\\
&\le Ch\|\bar\u-\bar\u_h\|_0(\|\bar\u-\bar\u_h\|_1+\|\bar p-\bar p_h\|_0)\\
&\le Ch\|\bar\u-\bar\u_h\|_0\|\bar\u\|_1.
\endaligned
\] 
\end{proof}
}
\corrigendum{
\begin{proposition}
\label{pr:infsuph2}
\corrig{Let us assume that the domain $\Omega$ is convex.}
If $h_x/h_s$ is sufficiently small and the mesh $\triaB$ is quasi-uniform,
then there exists a constant $\beta_1>0$ independent of $h_x$ and $h_s$
such that for all $\mmu_h\in\Lambda_h$ it holds true
\begin{equation}
\label{eq:infsuph2}
\sup_{(\v_h,\Y_h)\in\corrig{\Voh}\times\Sh}\frac{\c(\mmu_h,\vhcXb-\Y_h)}
{(\|\v_h\|^2_1+\|\Y_h\|^2_{1,\B})^{1/2}}\ge
\beta_1\|\mmu_h\|_{\LL}.
\end{equation}
\end{proposition}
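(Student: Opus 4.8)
The plan is to reduce to the continuous inf-sup condition of Proposition~\ref{pr:infsup2} and then to correct the continuous trial function into a discretely divergence-free one, controlling the resulting consistency error on $\B$ by the smallness of $h_x/h_s$. Fix $\mmu_h\in\Lh=\Sh$. By Proposition~\ref{pr:infsup2} (used with $\Y=0$) there is $\bar\u\in\Vo$, which we normalize so that $\|\bar\u\|_1=1$, with $\c(\mmu_h,\bar\u(\Xb))\ge\tfrac{\beta_0}{2}\|\mmu_h\|_{\LL}$. Since $\Omega$ is convex, Lemma~\ref{le:approxKKB} provides $\bar\u_h\in\Voh$ with $\|\bar\u-\bar\u_h\|_0\le Ch_x$ and $\|\bar\u_h\|_1\le C$; this $\bar\u_h$, together with $\Y_h=0$, will be the test function in~\eqref{eq:infsuph2}.

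The first step is to estimate the interface consistency error $\c(\mmu_h,(\bar\u-\bar\u_h)(\Xb))$. Since $\mmu_h\in\Sh\subset L^2(\B)^d$, the duality pairing reduces to the $L^2(\B)$ scalar product, hence by Cauchy--Schwarz $\c(\mmu_h,(\bar\u-\bar\u_h)(\Xb))\le\|\mmu_h\|_{0,\B}\,\|(\bar\u-\bar\u_h)(\Xb)\|_{0,\B}$. For the trace factor I would invoke the multiplicative trace inequality $\|\w(\Xb)\|_{0,\B}\le C\|\w\|_0^{1/2}\|\w\|_1^{1/2}$ for $\w\in\Huo$ (legitimate because $\Xb$ is bi-Lipschitz, so the trace on $\Gb=\Xb(\B)$ transfers to $\B$), which with Lemma~\ref{le:approxKKB} gives $\|(\bar\u-\bar\u_h)(\Xb)\|_{0,\B}\le Ch_x^{1/2}$. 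For the factor $\|\mmu_h\|_{0,\B}$ I would use that $\triaB$ is quasi-uniform: an inverse inequality gives $\|\mmu_h\|_{1/2,\B}\le Ch_s^{-1/2}\|\mmu_h\|_{0,\B}$, while testing the dual norm against $\mmu_h$ itself yields $\|\mmu_h\|_{\LL}\ge\|\mmu_h\|_{0,\B}^2/\|\mmu_h\|_{1/2,\B}$; combining, $\|\mmu_h\|_{0,\B}\le Ch_s^{-1/2}\|\mmu_h\|_{\LL}$. Putting the two estimates together,
\[
\c(\mmu_h,(\bar\u-\bar\u_h)(\Xb))\le C\Big(\frac{h_x}{h_s}\Big)^{1/2}\|\mmu_h\|_{\LL}.
\]

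The second step is to conclude. Writing $\c(\mmu_h,\bar\u_h(\Xb))=\c(\mmu_h,\bar\u(\Xb))-\c(\mmu_h,(\bar\u-\bar\u_h)(\Xb))$ and using the two bounds above,
\[
\c(\mmu_h,\bar\u_h(\Xb))\ge\Big(\frac{\beta_0}{2}-C\big(h_x/h_s\big)^{1/2}\Big)\|\mmu_h\|_{\LL},
\]
so if $h_x/h_s$ is small enough that $C(h_x/h_s)^{1/2}\le\beta_0/4$ we obtain $\c(\mmu_h,\bar\u_h(\Xb))\ge\tfrac{\beta_0}{4}\|\mmu_h\|_{\LL}$. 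Choosing $\v_h=\bar\u_h$ and $\Y_h=0$ in the supremum in~\eqref{eq:infsuph2} and dividing by $\|\bar\u_h\|_1\le C$ gives the claim with $\beta_1=\beta_0/(4C)$.

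I expect the main obstacle to be precisely the consistency-error estimate of the first step: the bilinear form $\c$ must be tested only against \emph{discretely} divergence-free functions $\v_h\in\Voh$, which forces the use of the correction $\bar\u_h$ (and hence of Lemma~\ref{le:approxKKB}, whose $L^2$ bound relies on the convexity of $\Omega$ through a duality argument) instead of a naive interpolant; and the two competing scales, the size $h_x$ of this correction in $L^2$ and the inverse-inequality loss $h_s^{-1/2}$ for $\mmu_h$, must be balanced, which is exactly what the hypothesis $h_x/h_s$ sufficiently small provides.
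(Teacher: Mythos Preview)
Your argument is correct and follows essentially the same route as the paper: reduce to the continuous inf-sup of Proposition~\ref{pr:infsup2}, correct the maximizing $\bar\u\in\Vo$ to $\bar\u_h\in\Voh$ via Lemma~\ref{le:approxKKB}, and bound the consistency error on $\B$ by combining the multiplicative trace estimate $\|(\bar\u-\bar\u_h)(\Xb)\|_{0,\B}\le Ch_x^{1/2}$ with the inverse inequality $\|\mmu_h\|_{0,\B}\le Ch_s^{-1/2}\|\mmu_h\|_{\LL}$ to get the $(h_x/h_s)^{1/2}$ factor. The only cosmetic differences are your normalization $\|\bar\u\|_1=1$ and your explicit derivation of the inverse inequality via $\|\mmu_h\|_{\LL}\ge\|\mmu_h\|_{0,\B}^2/\|\mmu_h\|_{1/2,\B}$, where the paper simply states it.
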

}
\begin{proof}
In the proof of Proposition~\ref{pr:infsup2} we have shown that
\[
\sup_{\v\in\corrig{\Vo}}\frac{\c(\mmu,\vcXb)}{\|\v\|_1}\ge
\frac1{2c}\|\mmu\|_{\LL}
\]
for all $\mmu\in\LL$.
Let us fix $\mmu_h\in\Lh$. Since $\Lh\subset\LL$, the above inequality holds
true also for $\mmu_h$. Let $\bar\u\in\corrig{\Vo}$ be the element in
$\corrig{\Vo}$ where the above supremum is attained, hence 
\[
\c(\mmu_h,\bar\u(\Xb))\ge \frac1{2c}\|\mmu\|_{\LL}\|\bar\u\|_1.
\]
\corrig{
Let $\bar\u_h\in\Voh$ be the approximation of $\bar\u$ introduced in
Lemma~\ref{le:approxKKB},
then, we write
}
\[
\c(\mmu_h,\bar\u_h(\Xb))=
\c(\mmu_h,\bar\u(\Xb))+\c(\mmu_h,\bar\u_h(\Xb)-\bar\u(\Xb)).
\]
We bound the second term on the right hand side, using a trace
theorem~\cite[Th.1.6.6]{BrennerScott} as follows
\[
\|\bar\u_h(\Xb)-\bar\u(\Xb)\|_{0,\B}\le 
C(\|\bar\u_h-\bar\u\|_{0,\Omega}\|\bar\u_h-\bar\u\|_{1,\Omega})^{1/2}\le
C h_x^{1/2} \|\bar\u\|_{1}
\]
and the first one by means of the following inverse inequality
\[
\|\mmu_h\|_{0,\B}\le Ch_s^{-1/2}\|\mmu_h\|_{\LL}.
\]
Hence we obtain
\[
\aligned
\c(\mmu_h,\bar\u_h(\Xb))&=
\c(\mmu_h,\bar\u(\Xb))+\c(\mmu_h,\bar\u_h(\Xb)-\bar\u(\Xb))\\
&\ge\frac1{2c}\|\mmu\|_{\LL}\|\bar\u\|_1
-C\|\mmu_h\|_{0,\B}h_x^{1/2}\|\bar\u\|_1\\
&\ge\|\mmu\|_{\LL}\|\bar\u\|_1
\left(\frac1{2c}-C\left(\frac{h_x}{h_s}\right)^{1/2}\right).
\endaligned
\]
The desired inequality follows easily from
\[
\sup_{\v\in\Vh}\frac{\c(\mmu_h,\vcXb)}{\|\v\|_1}
\ge \frac{\c(\mmu,\bar\u_h(\Xb))}{\|\bar\u_h\|_1}\ge
\left(\frac1{2c}-C\left(\frac{h_x}{h_s}\right)^{1/2}\right)
\frac{\|\bar\u\|_1}{\|\bar\u_h\|_1}\|\mmu_h\|_{\LL}
\]
if $(h_x/h_s)^{1/2}\le1/(2cC)$.

\end{proof}
\section*{Acknowledgements}
The authors were partially supported by PRIN/MIUR,
GNCS/INDAM, and IMATI/CNR.

\let\v\oldv 
\bibliographystyle{plain}
\bibliography{ref}

\end{document}